\newtheorem{theorem}{Theorem}
\newtheorem{lemma}{Lemma}
\newtheorem{proposition}{Proposition}
\theoremstyle{definition}
\newtheorem{example}{Example}
\newcommand{\E}{\mathbb{E}}
\newcommand{\vc}[3]{\overset{#2}{\underset{#3}{#1}}}
\newcommand{\LPP}{Z}
\newcommand{\LP}{\mathcal{Z}}
\newcommand{\1}{\mathbbm{1}}
\theoremstyle{definition}
\newtheorem{assumption}{Assumption}
\title{A new class of nonparametric tests for second-order stochastic dominance based on the Lorenz P-P plot}
\author[1,2]{Tommaso Lando}
\affil[1]{Department of Economics, University of Bergamo, Italy }
\author[1]{Sirio Legramanti}
\affil[2]{Department of Finance, V\v{S}B-TU Ostrava, Czech Republic}
\date{}
\begin{document}

%

\maketitle

\begin{abstract}
\noindent Given samples from two non-negative random variables, we propose a family of tests for the
null hypothesis that one random variable stochastically dominates the other at the second order. Test statistics are obtained as functionals of the difference between the identity and the Lorenz P-P plot, defined as the composition between the inverse unscaled Lorenz curve of
one distribution and the unscaled Lorenz curve of the other. We determine upper bounds for such test statistics under the null hypothesis and derive
their limit distribution, to be approximated via bootstrap procedures. We then establish the asymptotic validity
of the tests under relatively mild conditions and investigate finite sample properties through simulations. The results show that our testing approach can be a valid alternative to classic methods based on the difference of the integrals of the cumulative distribution functions, which require bounded support and struggle to detect departures from the null in some cases. 
\end{abstract}

%
%
%

\textbf{Keywords}: Bootstrap, Lorenz curve, Stochastic order.

\section{Introduction}

The theory of stochastic orders deals with the problem of comparing pairs of random variables, or the corresponding distributions, with respect to concepts such as size, variability (or riskiness), shape, aging, or combinations of these aspects. The main notion in this context is generally referred to as the \textit{usual stochastic order} or \textit{first-order stochastic dominance} (FSD), which expresses the concept of one random variable being \textit{stochastically larger} than the other \citep{shaked}. For this reason, FSD has important applications in all those fields in which ``more'' is preferable to ``less'', clearly including economics. However, FSD is a restrictive criterion, {and is rarely} satisfied in {real-world applications. This has pushed economic theorists to develop} finer concepts, which formed the theory of \textit{stochastic dominance} (SD), taking into account variability and shape, in addition to size \citep{hadar1969,hanoch1969,Whitmore,fishburn,muliere1989,wang1998}. In this regard, the most commonly used SD relation is the \textit{second-order SD} (SSD), expressing a preference for the random variable which is stochastically larger or at least less risky, therefore combining size and dispersion into a single preorder. This has applications in economics, finance, operations research, reliability, and many other fields in which decision-makers {typically} prefer larger or at least less uncertain outcomes. 

Given a pair of samples from two unknown random variables of interest, statistical methods may be employed to establish whether such variables are stochastically ordered. In particular, we focus on a major problem in nonparametric statistics, that is testing the null hypothesis of dominance versus the alternative of non-dominance. About SSD, several procedures are available in the literature, some of which are described in the book of \cite{whang2019}. {We will now} recall a few of these approaches. 
\cite{dd} proposed a test for SSD based on the distance between the integrals of the cumulative distribution functions (CDF). The problem with this test is that dominance is evaluated only on a fixed grid, which may lead to inconsistency. 
\cite{bd} employed a similar approach, combined with bootstrap methods, to formulate a class of tests that are consistent under the assumption that the distributions under analysis are supported on a compact interval. 
\cite{donaldhsu} leveraged a less conservative approach to determine critical values compared to \cite{bd}, avoiding the use of the least favourable configuration. 
Note that all the aforementioned papers deal more generally with finite-order SD, and then obtain SSD as a special case. 
Alternatively, other works focused on tests for the so-called \textit{Lorenz dominance}, which is a scale-free version of SSD that applies to non-negative random variables. 
{For example,} \cite{bdlorenz} proposed a class of consistent tests for {the} Lorenz dominance that {rely on} the distance between empirical Lorenz curves. In this case, supports may be unbounded. Critical values are determined by approximating the limit distribution of a stochastic upper bound of the test statistic, similar to \cite{bd}. \cite{sunbeare} {used} a different and less conservative bootstrap approach to improve the power of such tests, {and} {established} asymptotic properties under less restrictive distributional assumptions.

The main idea of this paper follows from noticing that some stochastic orders, including FSD, can be expressed and tested via the classic P-P plot, also referred to as the \textit{ordinal dominance curve} \citep{hsieh,schmid,davidov,beare2015,tang,bearenew}. Following a similar approach, we propose a new class of nonparametric tests for SSD between non-negative random variables, in which the test statistic is based on what we refer to as the \textit{Lorenz P-P plot} (LPP), a kind of P-P plot based on \textit{unscaled Lorenz curves}. More precisely, the LPP is obtained as the functional composition of the inverse unscaled Lorenz curve of one distribution and the unscaled Lorenz curve of the other. The key property of the LPP is that, under SSD, it does not exceed the identity function on the unit interval. Therefore, the LPP stands out as a promising tool for detecting deviations from the null hypothesis of SSD. Namely, any functional that quantifies the positive part of the difference between the identity and the LPP can be used to construct a test statistic. This gives rise to a whole class of tests, depending on the choice of the functional. The $p$-values of such tests can then be computed via bootstrap procedures. In particular, we use a similar idea as in \cite{bdlorenz} to asymptotically bound the size of the test, and establish its consistency via the functional delta method. Note that the consistency of our family of tests is established without requiring a bounded support, which represents an advantage compared to classic methods based on integrals of CDFs. Moreover, our simulation studies show that our tests are often more reliable than the established KSB3 test by \cite{bd}, which may have problems detecting violations of the null hypothesis in some cases.

The LPP may also be used to define families of fractional-degree orders that are ``between'' FSD and SSD (or beyond SSD) by using a simple transformation. 
In this regard, we propose a method to define a continuum of SD relations, called \textit{transformed SD}, in the spirit of the recent works by \cite{muller2017}, \cite{lando}, and \cite{huang2020}. 
Interestingly, our tests can be easily adapted to this {more general} family of orders by simply transforming the samples through the same transformation used in the definition of transformed SD. 
In particular, FSD can be obtained as a limiting case, in which the empirical LPP of the transformed sample tends to the classic empirical P-P plot. This opens up the possibility of applying our class of tests to a wide family of stochastic orders.

The paper is organised as follows. Section~\ref{sect 2} introduces the LPP and describes the idea behind the proposed family of tests. In Section~\ref{sect 3}, we propose an estimator of the LPP and study its properties. The empirical process associated with the LPP is investigated in Section~\ref{sect 4}, where we establish a weak convergence result that can be used to derive asymptotic properties of the tests. Namely, in Section~\ref{sect 5}, we establish bounded size under the null hypothesis and consistency under the alternative one, for {both} independent {and} paired samples. The extension to a family of fractional-degree {orders} is discussed in Section~\ref{sect fraction}. In Section~\ref{sect sim}, we illustrate the finite sample properties of the tests through simulation {studies, focusing} on tests arising from sup-norm and  integral-based functionals. Finally, Section~\ref{sec_conclusion} contains our concluding remarks. All the tables and proofs are reported in the Appendix.

\section{Preliminaries}\label{sect 2}

Throughout this paper, $H$ denotes a general CDF supported on the non-negative half line, {with finite mean $\mu_H$}. In particular, we consider a pair of non-negative random variables $X$ and $Y$ with CDFs $F$ and $G$, respectively, and finite expectations. When $F$ and $G$ are absolutely continuous, we will denote their densities with $f$ and $g$, respectively. Given that stochastic orders depend only on distribution functions, for any order relation $\succ$ we may write $X\succ Y$ or $F\succ G$ interchangeably. 

Let $L^p(0,1)$, for $p\geq 1,$ be the class of real-valued functions on the unit interval equipped with the $L^p$ norm $||.||_p$, that is, for $v\in L^p(0,1),$ $||v||_p=(\int_0^1|v(t)|^pdt)^{1/p}$, and $L^\infty(0,1)$ be the class of bounded real-valued functions equipped with the uniform norm $||.||_\infty$. Moreover, let $C[0,1]$ be the space of continuous real-valued functions on [0,1] also equipped with the uniform norm. Henceforth, ``increasing'' means ``non-decreasing'' and ``decreasing'' means ``non-increasing''. Given a function~$r$, we denote with $r_+=\max(0,r)$ its positive part. If $r$ is increasing, $r^{-1}(y)=\inf\{x:r(x)> y\}$ denotes its right-continuos generalised inverse. Finally, $ \rightsquigarrow $ denotes weak convergence, while $\rightarrow_p$ denotes convergence in probability.

\subsection{Stochastic dominance}

We say that $X$ is larger than $Y$ with respect to FSD, denoted as $X\geq_1 Y$, if $F(x)\leq G(x),\forall x$. Equivalently, $X\geq_1Y$ if and only if $\E u(X)\geq \E u(Y)$ for any increasing function $u$. 
Within an economic framework, coherently with the expected-utility approach, one may assume that $X$ and $Y$ represent monetary lotteries and $u$ is a utility function. Under this perspective, FSD represents all non-satiable decision-makers, that is all those with an increasing utility, {and} therefore can be seen as one of the strongest ordering principles. On the other hand, FSD has a limited range of applicability since, in real-world applications, CDFs often cross and hence distributions cannot be ordered using this criterion. 

For this reason, weaker ordering relations have been introduced, among which the most important is the SSD. We say that $X$ is larger than $Y$ with respect to SSD, denoted as $X\geq_2 Y$, if $\int_{-\infty}^xF(t)dt\leq \int_{-\infty}^xG(t)dt,\forall x$. Equivalently, $X\geq_2Y$ if and only if $\E u(X)\geq \E u(Y)$ for any increasing and concave function~$u$. In economics, SSD generally represents all non-satiable and risk-averse decision-makers, expressing a preference for the random variable with larger values or smaller dispersion. For example, $X\geq_2 Y$ entails that $\E X\geq\E Y$ and, in case of equality, $\text{Var}(X)\leq \text{Var}(Y)$ and $\Gamma(X)\leq \Gamma(Y)$, where $\Gamma$ denotes the Gini coefficient. 
The above definitions may be generalised to $k$-th order SD, denoted as $X\geq_k Y$, $k=1,2,3,...$, and represented by the following integral inequality $F^{[k]}(x)\leq G^{[k]}(x),\forall x$, where {$H^{[1]}=H$ and $H^{[k]}(x)=\int_{-\infty}^xH^{[k-1]}(t)dt$, for $k\geq 2$. 
	
	Besides the classic definitions of SD discussed above, different notions --- often including FSD and SSD as special or limiting cases --- have been studied in the literature.
	Notable examples are the \textit{inverse} SD \citep{muliere1989}, which is based on recursive integration of the quantile function instead of the CDF, and coincides with classic SD at degrees 1 and 2, and also some fractional-degree SD relations that interpolate FSD and SSD \citep[see, e.g.,][]{muller2017}, as discussed in more detail in Section~\ref{sect fraction}.

	\subsection{The Lorenz P-P plot}
	The goal of this paper is to test the null hypothesis $\mathcal{H}_0:X\geq_2 Y$ versus the alternative $\mathcal{H}_1:X\not\geq_2 Y$. This requires estimating some kind of distance between the situation of dominance and the situation of non-dominance. The classic solution \citep{dd,bd} is to construct test statistics based on an empirical version of the difference $\int_{-\infty}^xF(t)dt-\int_{-\infty}^xG(t)dt$, which is expected to be large, at least at some point, if $\mathcal{H}_0$ is false. However, the main issue with the usual definition of SSD, based on these integrated CDFs, is that these are unbounded in $[0,\infty)$, so there are no uniformly consistent estimators for such integrals unless both distributions have bounded support. Not by chance, \cite{bd} require that  $F$ and $G$ have common bounded support $[0,a]$, with $a$ finite, to derive consistent tests of stochastic dominance of order $k$, including SSD. 
	To avoid this limitation, we rely on an alternative but equivalent definition of SSD in terms of the unscaled Lorenz curve, which is always bounded. In particular, we observe that some stochastic orders may be alternatively expressed in terms of a Q-Q plot \citep{landotransform} or a P-P plot \citep{lehmann}. Similarly, SSD can be characterised by the modified P-P plot described below.
	
	Let $H^{-1}
	$ be the (left-continuous) quantile function of the CDF $H$. 
	The unscaled Lorenz curve of $H$ is defined as $	L_H(p)=\int_0^pH^{-1}(t)dt,p\in[0,1]$.
	The symbol $L_H$ is often used for the scaled version of the Lorenz curve, that is $L_H/\mu_H$, while we use $L_H$ to denote the \textit{unscaled} Lorenz curve, for the sake of simplicity. Also note that $L_H:[0,1]\rightarrow [0,\mu_H]$ is increasing, convex and continuous in the unit interval. However, for technical reasons, we also let $L_H(p)=+\infty$ for $p>1$. With this extension, the generalised inverse function $L_H^{-1}:[0,\infty)\rightarrow[0,1]$ is increasing, concave, and continuous in $[0,\mu_H]$, while $L_H^{-1}(y)=1$ for $y>\mu_H$. 
	
	Now, given the pair of CDFs $F$ and $G$, consider the increasing continuous function 
	\begin{equation*}
		\LPP(p) =L_G^{-1}\circ L_F(p), \qquad p\in[0,1],
	\end{equation*}
	which takes values in $ [0,1\land L_G^{-1}(\mu_F)] $, where $ x \land y $ denotes the minimum between two real numbers $ x $ and~$ y $.
	Letting $\nu=1\land L_F^{-1}(\mu_G)$, note that, if $\mu_G<\mu_F$, then $\nu<1$ and $\LPP (p)=1$ for $p\in(\nu,1]$. 
	Given some point $y=L_F(p)$, for $p\in[0,1]$, the graph of $\LPP $ is a P-P plot with coordinates $(L^{-1}_F(y),L^{-1}_G(y))$, which will be referred to as the Lorenz P-P plot (LPP). Within an economic framework, $\LPP (p)$ returns the probability given by $ G $ to the average level of income corresponding to $L_F(p)$. In particular, if such a level cannot be reached under $G$, we have $\LPP (p)=1$. The LPP is scale-free, like the classic P-P plot; in particular, if $X$ and $Y$ are multiplied by a positive scale factor, then $\LPP$ remains unchanged. 
	
	To see how $\LPP$ can be leveraged to characterize SSD, first recall that $
	X\geq_2 Y$ if and only if $L_F(p)\geq L_G(p), \ \forall p\in[0,1],$ see, e.g., \citet[Ch. 4]{shaked}.
	Such a relation can be equivalently expressed in terms of~$\LPP $: 
	\begin{equation}\label{comp}
		X\geq_2 Y\iff \LPP (p)\geq p, \ \forall p\in[0,1].
	\end{equation}
	{It is generally complicated to obtain an explicit expression of $\LPP$ for parametric probabilistic models. Explicit calculations for the case of a Weibull versus a unit exponential distribution are provided in Example~\ref{ex weibull} below, while a graphical illustration is given in Figure~\ref{f weibull}. Differently, and more importantly for our testing purposes, the LPP can be computed quite easily in the empirical case, as discussed in Section~\ref{sect 3}.
		
		\begin{figure}[h]
			\centering
			\includegraphics[width=.5\textwidth]{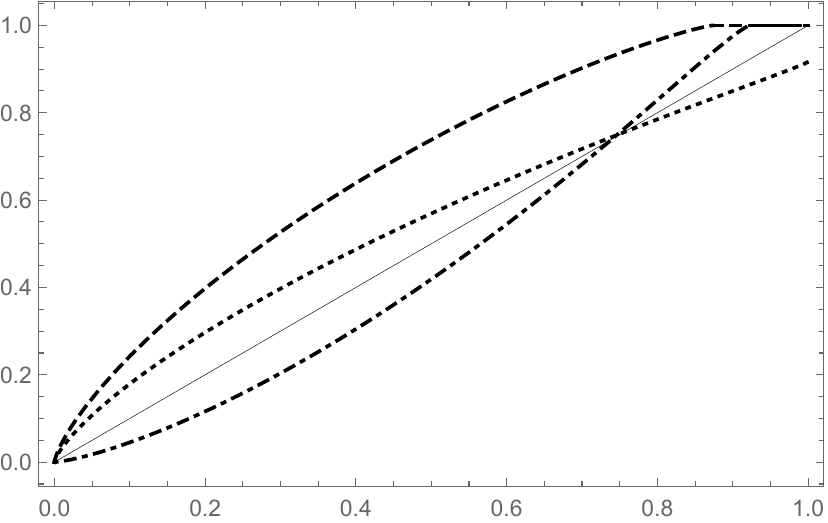}
			\caption{The LPP in Example~\ref{ex weibull} for: $a{=}2$, $b{=}1.5$ (dashed); $a{=}2$, $b{=}0.8$ (dotted); $a{=}0.6$, $b{=}1.2$ (dot-dashed).
				\label{f weibull}}
		\end{figure}
		
		\begin{example}\label{ex weibull}
			Consider the Weibull distribution $F(x)=1-\exp(-(x/b)^a),$ {with} $a,b>0$, and the unit exponential distribution, $G(x)=1-\exp(-x)$, both supported on $x\geq0$. 
			In this case, the LPP has the following expression (see the Appendix for details):$$\LPP(p)=1\land \mathcal{R}\left[1-\exp\left({1+W_{-1}\left(\frac{b \left(\Gamma \left(1+{1}/{a}\right)-\Gamma
					\left(1+{1}/{a},-\log (1-p)\right)\right)-1}{e}\right)}\right)\right],$$
			where $\mathcal{R}$ indicates the real part of a complex number, $\Gamma(\cdot,x)$ is the incomplete gamma function and $W_{-1}$ is the Lambert  function \citep{lambert}. Using the properties of SSD and the crossing conditions described in \cite{shaked}, it is easy to verify that $F\geq_2 G$ if and only if $a\geq1$ and $\mu_F=b \ \Gamma
			\left(1+{1}/{a}\right)\geq\mu_G=1$. Figure~\ref{f weibull} shows the behaviour of $\LPP$ when $F\geq_2 G$ and $F\not\geq_2 G$. 
		\end{example}

		\subsection{Detecting deviations from SSD}
		\label{construction}
		
		Denote the identity function by $I$. The representation of SSD in~\eqref{comp} can be leveraged to construct a test. In fact, $\mathcal{H}_0:X\geq_2 Y$ is false if and only if $I-\LPP$ is strictly positive at some point in the unit interval. Accordingly, departures from SSD can be detected by quantifying the positive part of the difference between $I$ and $\LPP$. This may be represented by some functional $\mathcal{T}$ applied to the difference $I-\LPP{\in C[0,1]}$, with $\mathcal{T}$ satisfying some desirable properties. 
		In particular, we propose a family of test statistics obtained as empirical versions of the functionals
		$$\mathcal{T}_p(I-\LPP)=||(I-\LPP)_+||_p,$$
		for $p\geq 1$, including $p=\infty.$ 
		
		It can be shown that functionals of this type satisfy the following properties.

		\begin{proposition}\label{properties}For every $v_1,v_2\in C[0,1]$ and for every $p\geq1$, 
			\begin{enumerate}
				\item If $v_1(x)=0,\forall x\in[0,1]$, then $\mathcal{T}_p(v_1)=0$ ;
				\item if $v_1(x)\leq0,\forall x\in[0,1]$, then ${\mathcal{T}_p}(v_2)\leq{\mathcal{T}_p}(v_2-v_1), \forall v_2$;
				\item if $v_1(x)>0$ for some $x\in[0,1]$, then $\mathcal{T}_p(v_1)>0$;
				\item $|\mathcal{T}_p(v_1)-\mathcal{T}_p(v_2)|\leq ||v_1-v_2||_\infty$;
				\item $c\mathcal{T}_p(v_1)=\mathcal{T}_p(cv_1)$, for any positive constant $c>0$;
				\item $\mathcal{T}_p$ is convex.
				\item For any $p_2\geq p_1\geq 1$, $\mathcal{T}_{p_2}(v_1)\geq \mathcal{T}_{p_1}(v_1)$.
			\end{enumerate}
		\end{proposition}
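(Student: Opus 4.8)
The plan is to exploit the fact that each $\mathcal{T}_p$ factors as the composition of the pointwise positive-part map $\phi(v)=v_+=\max(0,v)$ with the $L^p$-norm over the probability space $([0,1],\mathrm{Leb})$. Every one of the seven claims then reduces to pairing one elementary property of $\phi$ with one standard property of the norm. The facts I would isolate about $\phi$ are that, pointwise, it is monotone ($a\le b\Rightarrow a_+\le b_+$), $1$-Lipschitz ($|a_+-b_+|\le|a-b|$), convex, and positively homogeneous ($(ca)_+=c\,a_+$ for $c>0$). The facts I would isolate about $||\cdot||_p$ on the unit interval are monotonicity on nonnegative integrands, the (reverse) triangle inequality, positive homogeneity, the embedding $||w||_p\le||w||_\infty$ coming from total mass one, and monotonicity in the exponent, $||w||_{p_1}\le||w||_{p_2}$ for $p_1\le p_2$, the last being Jensen's (or H\"older's) inequality on a probability space. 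Since each integrand here is of the form $|\phi(\cdot)|=\phi(\cdot)$, the norm always acts on a nonnegative function, which is what makes the monotonicity steps available.

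With this toolkit, Claims 1 and 5 are immediate: $\phi(0)=0$ gives the first, and combining positive homogeneity of $\phi$ with that of the norm gives $\mathcal{T}_p(cv_1)=||c(v_1)_+||_p=c\,\mathcal{T}_p(v_1)$. For Claim 2, $v_1\le 0$ yields $v_2\le v_2-v_1$ pointwise, so monotonicity of $\phi$ gives $(v_2)_+\le(v_2-v_1)_+$, and monotonicity of the norm on nonnegative integrands finishes it. Claim 6 follows by applying convexity of $\phi$ pointwise, $(\lambda v_1+(1-\lambda)v_2)_+\le\lambda(v_1)_++(1-\lambda)(v_2)_+$, and then the triangle inequality together with monotonicity of the norm. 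Claim 7 is exactly monotonicity of $||\cdot||_p$ in $p$ applied to the nonnegative function $(v_1)_+$, with the case $p_2=\infty$ handled by the embedding $||\cdot||_{p_1}\le||\cdot||_\infty$.

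The only two claims needing a touch more care are 3 and 4. For Claim 4, the $1$-Lipschitz bound on $\phi$ gives $|(v_1)_+-(v_2)_+|\le|v_1-v_2|\le||v_1-v_2||_\infty$ pointwise; the reverse triangle inequality then yields $|\mathcal{T}_p(v_1)-\mathcal{T}_p(v_2)|\le||(v_1)_+-(v_2)_+||_p$, and bounding the integrand by the constant $||v_1-v_2||_\infty$ and using that constants have unit $L^p$-norm on $[0,1]$ produces the stated Lipschitz estimate (for $p=\infty$ one uses the same pointwise bound directly). For Claim 3, this is the single place where continuity of $v_1$ is genuinely used: a pointwise value $v_1(x_0)>0$ at an isolated point would have positive part supported on a null set and thus leave the finite-$p$ norm at zero, but continuity forces $v_1>0$ on a neighbourhood of $x_0$, so $(v_1)_+$ is strictly positive on a set of positive Lebesgue measure, whence $\mathcal{T}_p(v_1)>0$ for every finite $p$, while $\mathcal{T}_\infty(v_1)\ge v_1(x_0)>0$.

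I do not expect any step to present a real obstacle; the proposition is a packaging of standard convex-analytic and $L^p$ facts. The only subtleties worth flagging explicitly in the write-up are that Claim 3 relies on continuity to exclude a positive part carried by a measure-zero set, and that Claims 4 and 7 rely on working over a probability space, which is precisely what makes $||\cdot||_p$ both dominated by and monotone toward $||\cdot||_\infty$.
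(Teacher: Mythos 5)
Your proposal is correct and follows essentially the same route as the paper: both decompose $\mathcal{T}_p$ into the pointwise positive-part map composed with the $L^p$ norm on $[0,1]$, and derive each claim from monotonicity, Lipschitz continuity, convexity, and homogeneity of $(\cdot)_+$ together with Minkowski's inequality, the embedding $\|\cdot\|_p\leq\|\cdot\|_\infty$, and monotonicity of $\|\cdot\|_p$ in $p$ on a probability space. The only difference is cosmetic: for claim 3 the paper defers to Lemma~2 of Barrett--Donald's Lorenz-dominance paper, whereas you spell out the continuity-plus-positive-measure argument explicitly, which is exactly the argument that citation contains.
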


		Henceforth, we will denote simply by $\mathcal{T}$ any general functional satisfying the above properties~1)--6). These properties determine a family of functionals which may be used to obtain consistent tests. In particular, properties~2) and 3) completely characterise SSD, in that $\mathcal{T}({I}-\LPP)=0$ if and only if $X\geq_2 Y$, while $\mathcal{T}({I}-\LPP)>0$ if and only if $X\not\geq_2 Y$. Differently, property~7) deals just with the class $\mathcal{T}_p$ and shows that functionals of this kind measure the deviations from $\mathcal{H}_0$ in a monotone way, that is, smaller (larger) values of $p$ downsize (emphasize) deviations, represented by the function $(I-\LPP)_+$.
		Proposition~\ref{properties} generalises Lemma~2 of \cite{bdlorenz}, which deals with the special cases of $\mathcal{T}_1$ and $\mathcal{T}_\infty$. They introduced tests for the Lorenz dominance by applying $\mathcal{T}$ to the difference between the (scaled) Lorenz curves, that is $\mathcal{T}(L_G/\mu_G-L_F/\mu_F)$. One may extend their approach to SSD by considering  $\mathcal{T}(L_G-L_F)$ \citep[see, e.g.,][]{zhuang}. However, in this paper, we propose leveraging $\mathcal{T}(I-\LPP) $, which has some advantages over $\mathcal{T}(L_G-L_F)$. 
		For instance, $I-\LPP$ is scale-free by properties of the LPP. On the contrary, if $X$ and $Y$ are multiplied by a positive scale factor $c>0$, then the difference between the unscaled Lorenz curves becomes $c(L_G-L_F)$. Moreover, $|L_G-L_F|<\max(\mu_F,\mu_G)$ whereas $|I-\LPP|<1$.

		\section{Estimation of the LPP}\label{sect 3}
		
		\subsection{Sampling assumptions}
		Let $\mathcal{X}=\{X_1,...,X_n\}$ 
		and $\mathcal{Y}=\{Y_1,...,Y_m\}$ be i.i.d. random samples from $ F $ and $ G $, respectively.
		As in \cite{bdlorenz}, we will deal with two different sampling schemes: independent sampling and matched pairs. In the first scheme, the two samples $\mathcal{X}$ and $\mathcal{Y}$ are independent of each other, and sample sizes $n$ and $m$ may differ. In contrast, in the matched-pairs scheme, $n=m$ and we have $n$ i.i.d. pairs $\{(X_1,Y_1),...,(X_n,Y_n)\}$ drawn from a bivariate distribution with $F$ and $G$ as marginal CDFs.
		For both sampling schemes, we will consider the asymptotic regime in which $ n \to \infty $, $\lim_{n\rightarrow \infty} nm/(n+m)=\infty$ and $\lim_{n\rightarrow \infty} n/(n+m)=\lambda\in[0,1]$. These assumptions are quite standard in the literature \citep[see, e.g.,][]{bd} and imply that, as $ n $ diverges, $ m $ also goes to infinity with the same order. 
		
		\subsection{Empirical LPP}
		The abovementioned random samples $\mathcal{X}$ and $\mathcal{Y}$ yield the empirical CDFs $$F_n(x)=(1/n)\sum\nolimits_{i=1}^n\1(X_i\leq x)\quad\text{and }\quad G_m(x)=(1/m)\sum\nolimits_{i=1}^m\1(Y_i\leq x),$$respectively. We denote the order statistics of rank $k$ from $\mathcal{X}$ and $\mathcal{Y}$ with $X_{(k)}$ and $Y_{(k)}$, and their sample means with $\overline{X}_n$ and $\overline{Y}_m$, respectively. 
		By the plugin method, the empirical counterparts of $L_F$ and $L_G^{-1}$ are 
		$L_{F_n}$ and $L^{-1}_{G_m}$, where $L_{F_n}(p)=\int_0^{p}F_n^{-1}(t)dt$ for $p\in[0,1]$, $L_{G_m}$ is defined similarly, and $L_{G_m}^{-1}$ is the inverse of $L_{G_m}$. 
		Coherently with our definition of $L^{-1}_{G}$, we let $L_{G_m}^{-1}(p)=1$ for $p>\overline{Y}_m$. Note that $L_{F_n}$ coincides with the empirical unscaled Lorenz curve \citep{shorrocks1983}, that is a piecewise linear function joining the points $(k/n,(1/n)\sum_{i=0}^{k}X_{(i)})$, $k=0,...,n$, with $X_{(0)}:=0$. 
		
		Our definitions of $L_{F_n}$ and $L^{-1}_{G_m}$ differ from the ones, based on step functions, in \cite{csorgo2013}:
		$$\widetilde{L}_{F_n}(p)=
		\begin{cases}
			(1/n)\sum_{i=1}^{[np]+1}X_{(i)} \quad &p\in[0,1),\\
			\overline{X}_n &p=1,\\
			+\infty &p>1,
		\end{cases} $$
		$$ \widetilde{L}^{-1}_{G_m}(p)=\begin{cases}
			0 \qquad &p\in[0,Y_{(1)}/m]\\
			(k-1)/m &p\in[(1/m)\sum_{i=1}^{k-1}Y_{(i)},(1/m)\sum_{i=1}^{k}Y_{(i)}), \quad 2\leq k\leq m,\\
			1 &p\geq \overline{Y}_m\end{cases}$$
		where clearly $\widetilde{L}_{G_m}^{-1}(p)=\inf\{u:\widetilde{L}_{G_m}(u)>p\}$. Note that $L_{F_n}$ and $\widetilde{L}_{F_n}$ coincide at points $X_{(i)}$, $i=1,..,n$, and, likewise, $L^{-1}_{G_m}$ and $\widetilde{L}^{-1}_{G_m}$ coincide at points $i/n$, so these alternative empirical versions of $L_F$ and $L_G^{-1}$ are clearly asymptotically equivalent.
		
		According to the different empirical versions of $L_F$ and $L_G^{-1}$, we may obtain different estimators of~$\LPP $. 
		One may consider ${\LPP }_{n,m}={L}^{-1}_{G_m}\circ {L}_{F_n}$, which is a continuous piecewise linear function, or alternatively $\widetilde{\LPP }_{n,m}=\widetilde{L}^{-1}_{G_m}\circ \widetilde{L}_{F_n}$, which is a step function with jumps in the points $i/n$ $(i=1,...,n)$, taking values in $\{j/m: j=1,...,m \}$. In an economic framework, $\widetilde{L}_{G_m}^{-1}$ gives the relative frequency of observations from~$Y$ whose level of income is at most $(1/m)\sum_{j=1}^kY_{(j)}$. Therefore, $\widetilde{\LPP }_{n,m}(k/n)$ returns the relative frequency of observations from $Y$ whose level of income is at most $(1/n)\sum_{i=1}^kX_{(i)}$. Note that the value of ${\LPP }_{n,m}$ at its ``node'' points $ i/n $ does not generally coincide with the value of $\widetilde{\LPP }_{n,m}$ at its jump points. In this paper, we will use $\widetilde{\LPP }_{n,m}$ or ${\LPP }_{n,m}$ as is more convenient, since the two are asymptotically equivalent. In fact, the sup-distance among $\widetilde{\LPP }_{n,m}$ and $\LPP _{n,m}$ tends to zero as $n$ and $ m $ diverge, as established in the following proposition.
		\begin{proposition}\label{P1}
			For any $ n,m>0 $, $\sup|\widetilde{\LPP }_{n,m}-\LPP _{n,m}|\leq 1/m$. Hence, as $ m \to \infty $,	
			$\sup|\widetilde{\LPP }_{n,m}-\LPP _{n,m}| \to 0$.
		\end{proposition}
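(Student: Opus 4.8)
The plan is to trace the constant $1/m$ to its only possible source, the replacement of the continuous piecewise-linear inverse $L^{-1}_{G_m}$ by the staircase $\widetilde{L}^{-1}_{G_m}$, and to show that passing from one estimator to the other never moves the composed curve by more than a single step of height $1/m$. Two discretisations are in play and I would keep them separate: the \emph{outer} inverse unscaled Lorenz curves $\widetilde{L}^{-1}_{G_m}$ versus $L^{-1}_{G_m}$, and the \emph{inner} Lorenz curves $\widetilde{L}_{F_n}$ versus $L_{F_n}$. First I would reduce the whole claim to a clean uniform bound on the outer functions, and only afterwards control the effect of changing the inner function through the monotonicity of $L^{-1}_{G_m}$.

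The first step is the pointwise sandwich
\begin{equation*}
0 \le L^{-1}_{G_m}(y) - \widetilde{L}^{-1}_{G_m}(y) \le \tfrac{1}{m}, \qquad y \ge 0 .
\end{equation*}
This follows directly from the definitions: on each cell $[\,(1/m)\sum_{i=1}^{j-1}Y_{(i)},\,(1/m)\sum_{i=1}^{j}Y_{(i)}\,]$ the continuous inverse $L^{-1}_{G_m}$ rises linearly by exactly $1/m$, from $(j-1)/m$ to $j/m$, whereas $\widetilde{L}^{-1}_{G_m}$ stays flat at the left-endpoint value $(j-1)/m$; the two agree at the nodes and differ by less than one step in between. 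The degenerate regions are immediate, since for $y \le Y_{(1)}/m$ both functions lie in $[0,1/m]$ and for $y \ge \overline{Y}_m$ both equal $1$. Because the estimators differ only through these inverse curves once the argument is fixed, the sandwich already gives $|\widetilde{L}^{-1}_{G_m}(L_{F_n}(p)) - L^{-1}_{G_m}(L_{F_n}(p))| \le 1/m$ for every $p$; the genuine work is to absorb the change of the inner function.

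To handle the composition I would exploit that $\widetilde{L}_{F_n}$ is constant on each cell $[k/n,(k+1)/n)$, where it takes exactly the right-endpoint value of the piecewise-linear $L_{F_n}$, that is $\widetilde{L}_{F_n}(p) = L_{F_n}((k+1)/n) \ge L_{F_n}(p)$, the two inner curves coinciding at the nodes $i/n$. Since $L^{-1}_{G_m}$ and $\widetilde{L}^{-1}_{G_m}$ are increasing, on such a cell $\LPP_{n,m}$ increases while $\widetilde{\LPP}_{n,m}$ is flat; combining monotonicity with the sandwich of the previous step controls $\LPP_{n,m}(p) - \widetilde{\LPP}_{n,m}(p)$ by the outer gap at the node $(k+1)/n$, which is at most $1/m$. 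I expect the main obstacle to be the reverse inequality, $\widetilde{\LPP}_{n,m}(p) - \LPP_{n,m}(p) \le 1/m$: there the flat value $\widetilde{L}^{-1}_{G_m}(L_{F_n}((k+1)/n))$ must be compared with $\LPP_{n,m}$ near the \emph{left} endpoint of the cell, so one is forced to quantify how far the staircase $\widetilde{L}_{F_n}$ overshoots $L_{F_n}$ inside the cell and to argue that, after applying the outer inverse, this overshoot cannot open a vertical gap exceeding a single step $1/m$. This is the delicate point, since $L^{-1}_{G_m}$ is only Lipschitz with a data-dependent constant; I would attack it through the node identity $\widetilde{L}_{F_n}(p)=L_{F_n}((k+1)/n)$ together with the grid structure of $\widetilde{L}^{-1}_{G_m}$, whose values lie in $\{j/m\}$, rather than through any Lipschitz estimate. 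Once the uniform bound $1/m$ is in hand for all $n,m$, letting $m\to\infty$ yields the stated convergence immediately.
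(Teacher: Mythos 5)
Your first two steps are correct, and they coincide with the part of the paper's own proof that is sound: the sandwich $0\le L^{-1}_{G_m}-\widetilde{L}^{-1}_{G_m}\le 1/m$ is exactly how the paper controls its second summand $\widetilde{L}^{-1}_{G_m}\circ L_{F_n}-L^{-1}_{G_m}\circ L_{F_n}$, and your monotonicity argument for the direction $\LPP_{n,m}-\widetilde{\LPP}_{n,m}\le 1/m$ is valid: on a cell $[k/n,(k+1)/n)$ one has $L_{F_n}(p)\le\widetilde{L}_{F_n}(p)=L_{F_n}((k+1)/n)$, hence $\LPP_{n,m}(p)\le L^{-1}_{G_m}\bigl(L_{F_n}((k+1)/n)\bigr)\le\widetilde{\LPP}_{n,m}(p)+1/m$. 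The gap is the reverse inequality $\widetilde{\LPP}_{n,m}-\LPP_{n,m}\le 1/m$, which you explicitly defer (``I would attack it through the node identity \ldots together with the grid structure'') without ever giving the argument. Since the entire content of the proposition sits there, the proposal does not prove the statement. Note also that, contrary to what you write, the inner curves do \emph{not} coincide at the nodes: $\widetilde{L}_{F_n}(k/n)=L_{F_n}((k+1)/n)$, so at the left endpoint of each cell the inner staircase overshoots $L_{F_n}$ by the full increment $X_{(k+1)}/n$.

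That overshoot is why no grid-counting argument can close the gap: the inequality you need is false. At $p=k/n$ one has $\widetilde{\LPP}_{n,m}(p)-\LPP_{n,m}(p)=\widetilde{L}^{-1}_{G_m}(L_{F_n}((k+1)/n))-L^{-1}_{G_m}(L_{F_n}(k/n))$, which is within $1/m$ of the rise of $L^{-1}_{G_m}$ over an interval of length $X_{(k+1)}/n$; that rise is one step of size $1/m$ for every partial sum $(1/m)\sum_{i\le j}Y_{(i)}$ falling inside the interval, and nothing bounds how many there are. Concretely, take $n=m=3$ with realisations $(1,1,30)$ for $\mathcal{X}$ and $(1,2,3)$ for $\mathcal{Y}$: then $L_{F_n}(2/3)=2/3$ and $\LPP_{n,m}(2/3)=L^{-1}_{G_m}(2/3)=1/2$, while $\widetilde{L}_{F_n}(2/3)=32/3>\overline{Y}_m=2$, so $\widetilde{\LPP}_{n,m}(2/3)=1$ and the difference is $1/2>1/3=1/m$. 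The same example defeats the unproved assertion in the paper's own proof that its first summand $\widetilde{L}^{-1}_{G_m}\circ\widetilde{L}_{F_n}-\widetilde{L}^{-1}_{G_m}\circ L_{F_n}$ is bounded by one jump height (here it equals $2/3$ at $p=2/3$). So you located the delicate point exactly --- it is precisely where the published argument also breaks, and the finite-sample claim ``$\le 1/m$ for any $n,m$'' is simply not true. What survives is the asymptotic half of the statement: since $L^{-1}_{G_m}$ is Lipschitz with constant $1/Y_{(1)}$, your one-sided bound combines with $\widetilde{\LPP}_{n,m}-\LPP_{n,m}\le X_{(n)}/(nY_{(1)})$ to give $\sup|\widetilde{\LPP}_{n,m}-\LPP_{n,m}|\le\max\{1/m,\,X_{(n)}/(nY_{(1)})\}$, which under Assumptions~\ref{ass1}--\ref{ass2} tends to zero a.s.\ (and is even $o(r_n^{-1/2})$, which is what the later sections actually use) --- but that is a distribution-dependent argument, not the distribution-free single-step bound that both you and the paper aim for.
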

		In our asymptotic scenario, when $ n \to \infty $ we also have $ m \to \infty $, hence the second part of Proposition~\ref{P1} holds. Moreover, based on the strong uniform consistency of Lorenz curve estimators and their inverse functions \citep{goldie,csorgo2013}, we can prove the strong uniform consistency of $\LPP _{n,m}$ and~$\widetilde{\LPP }_{n,m}$.
		\begin{proposition}\label{P2}
			As $ n ,m\to \infty $,
			$\LPP _{n,m}\rightarrow \LPP $ and $\widetilde{\LPP }_{n,m}\rightarrow \LPP $ a.s. and uniformly in~$[0,1]$.
		\end{proposition}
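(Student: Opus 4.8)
The plan is to deduce the result for $\LPP_{n,m}$ from the two strong uniform consistency facts recalled just before the statement, and then to obtain the claim for $\widetilde{\LPP}_{n,m}$ essentially for free via Proposition~\ref{P1}. Concretely, I would work on a single event of probability one on which (i) $\sup_{p\in[0,1]}|L_{F_n}(p)-L_F(p)|\to 0$ and (ii) $\sup_{y\in[0,M]}|L_{G_m}^{-1}(y)-L_G^{-1}(y)|\to 0$ for every fixed $M>0$, together with the strong law $\overline{X}_n\to\mu_F$. The only analytic input needed beyond these is that $L_G^{-1}$ is uniformly continuous on any compact interval $[0,M]$; this holds because $L_G^{-1}:[0,\infty)\to[0,1]$ is increasing and continuous, being equal to $1$ for $y>\mu_G$, hence continuous on the compact set $[0,M]$.

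The core step is a triangle-inequality decomposition of the composition error into its two ``layers'':
\[
\sup_{p}\big|L_{G_m}^{-1}(L_{F_n}(p))-L_G^{-1}(L_F(p))\big|
\le \sup_{p}\big|L_{G_m}^{-1}(L_{F_n}(p))-L_G^{-1}(L_{F_n}(p))\big|
+\sup_{p}\big|L_G^{-1}(L_{F_n}(p))-L_G^{-1}(L_F(p))\big|.
\]
Next I would control the range of the inner estimator: since $\overline{X}_n\to\mu_F$, fix $M=\mu_F+1$; there is $N$ such that for all $n\ge N$ we have $L_{F_n}(p)\le\overline{X}_n\le M$ for every $p$, while $L_F(p)\le\mu_F\le M$ always. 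Hence, for $n\ge N$, the first supremum on the right is bounded by $\sup_{y\in[0,M]}|L_{G_m}^{-1}(y)-L_G^{-1}(y)|$, which vanishes by (ii). For the second supremum, uniform continuity of $L_G^{-1}$ on $[0,M]$ converts the input gap $\sup_p|L_{F_n}(p)-L_F(p)|\to 0$ from (i) into a uniformly small output gap, so this term vanishes as well. This yields $\sup_p|\LPP_{n,m}(p)-\LPP(p)|\to0$ a.s.

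For the step-function estimator I would simply write $\sup|\widetilde{\LPP}_{n,m}-\LPP|\le \sup|\widetilde{\LPP}_{n,m}-\LPP_{n,m}|+\sup|\LPP_{n,m}-\LPP|$ and invoke Proposition~\ref{P1} for the first term (bounded by $1/m\to0$ as $m\to\infty$) together with the convergence just established for the second term.

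I would expect the main obstacle to be the bookkeeping around the domain of $L_G^{-1}$: the inverse Lorenz curve is defined on the unbounded half-line, so one cannot simply appeal to uniform continuity globally, nor to uniform consistency of $L_{G_m}^{-1}$ on all of $[0,\infty)$. The device that resolves this is confining the arguments $L_{F_n}(p)$ to the compact set $[0,M]$ using $\overline{X}_n\to\mu_F$, after which both the inverse-consistency bound (ii) and the uniform-continuity argument apply on $[0,M]$; everything else is a routine $\varepsilon$--$\delta$ composition argument.
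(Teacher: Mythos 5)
Your proof is correct and takes essentially the same route as the paper's: the same two-term triangle-inequality decomposition of the composition error, the same key inputs (strong uniform consistency of the empirical unscaled Lorenz curve and of its inverse from \cite{csorgo2013}, plus uniform continuity of $L_G^{-1}$), and Proposition~\ref{P1} to transfer the conclusion from one estimator to the other (the paper merely proceeds in the opposite order, establishing the claim for $\widetilde{\LPP}_{n,m}$ first and deducing it for $\LPP_{n,m}$). The only substantive deviation is your compact-localization device via $\overline{X}_n\rightarrow\mu_F$, which is valid but unnecessary: the obstacle you flag is not a real one, since $L_G^{-1}$ is continuous on $[0,\infty)$ and constant (equal to $1$) beyond $\mu_G$, hence uniformly continuous on the whole half-line, and the paper appeals to exactly this global uniform continuity.
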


		\section{Weak convergence of the LPP process}
		\label{sect 4}
		
		The empirical process associated with $\LPP$, henceforth referred to as the LPP process, may be useful to characterize the limit distribution of the test statistic under the null hypothesis of SSD. In this section, we study the asymptotic properties of such a process.
		Define the LPP process as 
		$$\LP _n(p)=\sqrt{r_n}(\LPP _{n,m}(p)-\LPP (p)), \qquad p\in[0,1],$$
		where ${r_n}={nm}/{(n+m)}$, and let $\nu_n=1\land L_{F_n}^{-1}(\overline{Y}_m)$ be the empirical counterpart of $\nu=1\land L_F^{-1}(\mu_G)$.
		For $\nu<1$ we know that $\LPP (t)=1$ when $t\in(\nu,1]$. In this case we have $\sup|\LP _n\1{(\nu,1]}|\rightarrow 0$ a.s., since also $\LPP _{n,m}(p)=1$ for $t\in(\nu_n,1]$ and $\nu_n\rightarrow \nu$ a.s. In other words, the interval $(\nu_n,1]$ contains no information. Accordingly, we are particularly interested in the asymptotic behaviour of $\LP _n$ restricted to $[0,\nu_n]$, namely $\LP _n\1{[0,\nu_n]}$. Weak convergence of the LPP process can be derived under the following assumptions.
		\begin{assumption} \label{ass1}
			Both $ F $ and $ G $ are continuously differentiable with strictly positive density, and have a finite moment of order $2+\epsilon$ for some $\epsilon>0$. Moreover, $F(0)=G(0)=0$.
		\end{assumption}
		\begin{assumption} \label{ass2}
			There exists some number $c>0$ such that $G^{-1}(0)=c$.
		\end{assumption}
		
		The latter assumption does not represent a limitation in terms of applicability. In fact, if $G^{-1}(0)=0$, one can apply the test to the shifted samples $\mathcal{X}+\epsilon$ and $\mathcal{Y}+\epsilon$, for some small $\epsilon>0$, recalling that $X\geq_2 Y$ if and only if $ X+\epsilon \geq_2 Y+\epsilon$. 
		In our simulations we set $\epsilon=10^{-4}$, obtaining results that are almost indistinguishable from those under $\epsilon=0$. However, since the unscaled Lorenz curve is not translation invariant, the outcome of any test based on it \citep[such as, e.g.,][]{andreoli,zhuang} may depend on the shift $\epsilon$. Actually, in our experiments, we noted that larger values of $\epsilon$ may even improve the power of the test.
		
		The following theorem establishes the weak convergence of $\LP_n$ under Assumptions~\ref{ass1}--\ref{ass2}, leveraging some recent results in \cite{kaji} that enable the derivation of the Hadamard differentiability of the map from CDFs to quantile functions. As discussed in \citet[][Section~2.4]{sunbeare}, this extends the applicability of earlier Hadamard differentiability conditions, based on stronger distributional assumptions such as bounded supports \citep[Lemma 3.9.23]{vw}. Then, the weak convergence of $\LP_n$ follows by the functional delta method \citep[Sect. 3.9]{vw}. 
		
		Let $\mathcal{B}$ be a centered Gaussian element of $C[0,1]\times C[0,1]$ with covariance function
		$Cov(\mathcal{B}(x_1,y_1),\mathcal{B}(x_2,y_2))=C(x_1\land x_2,y_1\land y_2)-C(x_1,y_1)C(x_2,y_2)$.
		Under the independent-sampling scheme, $C(x_1,y_1)=x_1y_1$ is the product copula, whereas, under the matched-pairs scheme, $C$ is the copula associated with the pair $(X_i,Y_i)$, $i=1,...,n$. Now, let $\mathcal{B}_1(x_1)=\mathcal{B}(x_1,1)$ and $\mathcal{B}_2(x_2)=\mathcal{B}(1,x_2)$. The random elements $\mathcal{B}_1$ and $\mathcal{B}_2$ are Brownian bridges that are independent under the independent-sampling scheme, but may be dependent under the matched-pairs one.
		
		\begin{theorem}\label{LPP}
			Under Assumptions~\ref{ass1}--\ref{ass2} and both independent-sampling and matched-pairs schemes, we have	$\sqrt{r_n}(\LPP _{n,m}-\LPP )\rightsquigarrow \LP \1{[0,\nu]}$ in $C[0,1],$ where 
			$$\LP = \frac{-\sqrt{\lambda}\int_0^\LPP \mathcal{B}_2(p)dG^{-1}(p)  +\sqrt{1-\lambda}\int_0^.\mathcal{B}_1dF^{-1}(p)}{G^{-1}\circ\LPP }.$$ 
		\end{theorem}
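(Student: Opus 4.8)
The plan is to derive the result from the functional delta method \citep[Sect.~3.9]{vw}, applied to the map $\Phi:(F,G)\mapsto L_G^{-1}\circ L_F$ written as a composition of elementary maps, after first establishing the joint weak convergence of the empirical inputs. For the latter, Donsker's theorem together with the sampling assumptions gives $\sqrt{r_n}(F_n-F)\rightsquigarrow\sqrt{1-\lambda}\,\mathcal{B}_1\circ F$ and $\sqrt{r_n}(G_m-G)\rightsquigarrow\sqrt{\lambda}\,\mathcal{B}_2\circ G$, since $r_n/n=m/(n+m)\to1-\lambda$ and $r_n/m=n/(n+m)\to\lambda$; the two schemes differ only through the joint law of $(\mathcal{B}_1,\mathcal{B}_2)$, governed by the copula $C$ (independence under independent sampling, the copula of $(X_i,Y_i)$ under matched pairs), whereas the delta-method argument is the same in both cases.

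Next I would decompose $\Phi$ and chain the Hadamard derivatives. Write $\Phi$ as (i) the quantile maps $F\mapsto F^{-1}$, $G\mapsto G^{-1}$; (ii) the integration maps $F^{-1}\mapsto L_F=\int_0^\cdot F^{-1}$, and likewise for $G$; (iii) the inverse map $L_G\mapsto L_G^{-1}$; and (iv) the composition map $(A,B)\mapsto B\circ A$ applied to $(L_F,L_G^{-1})$. Step (i) is the delicate one: with unbounded supports the classical condition \citep[Lemma 3.9.23]{vw} does not apply, so I would instead invoke \cite{kaji} to obtain Hadamard differentiability of the quantile map into an $L^p(0,1)$-type space, the $2+\epsilon$ moment bound and strictly positive density of Assumption~\ref{ass1} supplying the hypotheses; its derivative in direction $h$ is $-h\circ F^{-1}/(f\circ F^{-1})$. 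Step (ii) is a bounded linear map from $L^1(0,1)$ into $C[0,1]$, hence trivially Hadamard differentiable, and crucially it restores uniform convergence of the integrated (Lorenz) processes. Step (iii) relies on inverse-map differentiability, legitimate because Assumption~\ref{ass2} forces $L_G'=G^{-1}\ge c>0$, so $L_G$ is strictly increasing with derivative bounded away from zero on the relevant range. Step (iv) uses Hadamard differentiability of composition. The chain rule then yields differentiability of $\Phi$ tangentially to the appropriate set, and the functional delta method gives weak convergence of $\sqrt{r_n}(\LPP_{n,m}-\LPP)$ to $\Phi'$ evaluated at the limit input.

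To identify $\Phi'$ with the stated $\LP$ I would differentiate the defining identity $L_G(\LPP(p))=L_F(p)$, valid for $p\le\nu$. Linearising $L_{G_m}(\LPP_{n,m})=L_{F_n}$ around the truth gives, to first order, $L_G'(\LPP)\,(\LPP_{n,m}-\LPP)+(L_{G_m}-L_G)\circ\LPP\approx L_{F_n}-L_F$, whence
\[
\LPP_{n,m}-\LPP\approx\frac{(L_{F_n}-L_F)-(L_{G_m}-L_G)\circ\LPP}{G^{-1}\circ\LPP}.
\]
Combining steps (i)--(ii) gives $\sqrt{r_n}(L_{F_n}-L_F)\rightsquigarrow-\sqrt{1-\lambda}\int_0^\cdot\mathcal{B}_1\,dF^{-1}$ and $\sqrt{r_n}(L_{G_m}-L_G)\rightsquigarrow-\sqrt{\lambda}\int_0^\cdot\mathcal{B}_2\,dG^{-1}$ (using $dF^{-1}(s)=ds/f(F^{-1}(s))$), and substituting into the display, so that the composition sends the upper limit to $\LPP$, reproduces the numerator and denominator of $\LP$. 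The overall sign differs from the statement, but this is immaterial: $(\mathcal{B}_1,\mathcal{B}_2)$ is centered Gaussian, hence $\LP\overset{d}{=}-\LP$ jointly in both schemes.

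Finally, I would handle the restriction to $[0,\nu]$. On $(\nu,1]$ one has $\LPP\equiv1$, and since $\nu_n\to\nu$ a.s.\ with $\LPP_{n,m}\equiv1$ on $(\nu_n,1]$, the rescaled process is asymptotically negligible there, as already noted before the theorem; this produces the indicator $\1_{[0,\nu]}$. I expect the main obstacle to be step (i): transferring weak convergence through the quantile map without bounded support, which requires the $L^p$-space formulation of \cite{kaji} and careful verification of its hypotheses near the endpoints $0$ and~$1$, together with controlling the composition in step (iv) and the non-degeneracy of the denominator $G^{-1}\circ\LPP$ (guaranteed by Assumption~\ref{ass2}) uniformly up to $p=\nu$.
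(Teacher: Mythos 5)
Your proposal matches the paper's own proof in all essentials: the same decomposition (Kaji's Hadamard differentiability of the quantile map to bypass bounded-support conditions, the linear integration map yielding the Lorenz processes, the inverse-map theorem justified by $L_G'=G^{-1}\geq c>0$ under Assumption~\ref{ass2}, and differentiability of the composition map), chained via the functional delta method from the joint weak convergence of the empirical CDFs, with the same treatment of the negligible region $(\nu,1]$. The only cosmetic differences are that you identify the limit by linearising the identity $L_G\circ \LPP=L_F$ rather than quoting the composition-map derivative of van der Vaart and Wellner, and your remark that the sign is immaterial by symmetry of the centered Gaussian limit, which in fact reconciles the formula stated in the theorem with the one the paper's own proof derives.
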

		It is interesting to observe that, if $X=_dY$, the result of Theorem~\ref{LPP} boils down to
		\begin{align*}
			\sqrt{r_n}(\LPP _{n,m}(t)-t) &\rightsquigarrow 
			\frac1{F^{-1}(t)}\int_0^t(\sqrt{\lambda}\mathcal{B}_2(u)-\sqrt{1-\lambda}\mathcal{B}_1(u))dF^{-1}(u)\\
			=\frac1{F^{-1}(t)}\int_0^t\widetilde{\mathcal{B}}(u)dF^{-1}(u) \quad \text{in } C[0,1],
		\end{align*}
		where $\widetilde{\mathcal{B}}$ is the Brownian bridge defined as
		$\widetilde{\mathcal{B}}=-\sqrt{1-\lambda}\mathcal{B}_1+\sqrt{\lambda}\mathcal{B}_2$.
		Finally, note that, by the asymptotic equivalence implied by Proposition~\ref{P1}, all the results in this section still hold if one replaces $\LPP _{n,m}$ with $\widetilde{\LPP }_{n,m}$.

		\section{Asymptotic properties of the test}\label{sect 5}
		As discussed in Section~\ref{construction}, deviations from $\mathcal{H}_0: X\geq_2 Y$ can be measured via the test statistic $\mathcal{T}_n=\sqrt{r_n} \ {\mathcal{T}} (I-\LPP _{n,m}).$ Intuitively, we reject $\mathcal{H}_0$ if $\mathcal{T}_n$ is large enough. However, since the null hypothesis is nonparametric, the main issue is how to determine the distribution of $\mathcal{T}_n$, or alternatively of an upper bound for $\mathcal{T}_n$, under~$\mathcal{H}_0$. Following the approach of \cite{bdlorenz}, it is easily seen that, under $\mathcal{H}_0$, the test statistic $\sqrt{r_n} \ \mathcal{T}(I-\LPP _{n,m})$ is dominated by $\sqrt{r_n} \ \mathcal{T}(\LPP -\LPP _{n,m})$, which therefore can be used to simulate $p$-values or critical values via bootstrap, thus ensuring that the size of the test is asymptotically bounded by some arbitrarily small probability $\alpha$. By the continuous mapping theorem, ${\sqrt{r_n}} \ \mathcal{T}(\LPP -\LPP _{n,m})$ is asymptotically distributed as $\mathcal{T}(\LP )$, allowing us to derive large-sample properties of the test. The limit behaviour of $\mathcal{T}_n$ under the null and the alternative hypotheses is established in the following lemma.

		\begin{lemma}\label{lemmatest}
			\begin{enumerate}\
				\item Under $\mathcal{H}_0$, $\sqrt{r_n} \ \mathcal{T}(I-\LPP _{n,m})\leq \sqrt{r_n} \ \mathcal{T}(\LPP -\LPP _{n,m})\rightsquigarrow \mathcal{T}(\LP )$. Moreover, for any $\alpha<1/2$, the $(1-\alpha)$ quantile of $\mathcal{T}(\LP )$ is positive, finite, and unique.
				\item Under $\mathcal{H}_1$, $\sqrt{r_n} \ \mathcal{T} (I-\LPP _{n,m})\rightarrow_p \infty$. 
			\end{enumerate}
		\end{lemma}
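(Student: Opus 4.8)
The plan is to prove the two parts separately, obtaining the stochastic domination and the weak limit in part~1 from the functional properties of Proposition~\ref{properties} combined with Theorem~\ref{LPP}, and the divergence in part~2 from the almost-sure uniform consistency of the estimator.

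For the inequality in part~1, I would exploit the characterisation~\eqref{comp}: under $\mathcal{H}_0$ we have $\LPP\geq I$, i.e.\ $I-\LPP\leq 0$ on $[0,1]$. Applying property~2 of Proposition~\ref{properties} with $v_2=I-\LPP_{n,m}$ and $v_1=I-\LPP\leq 0$, and observing that $v_2-v_1=\LPP-\LPP_{n,m}$, gives $\mathcal{T}(I-\LPP_{n,m})\leq\mathcal{T}(\LPP-\LPP_{n,m})$; multiplying by $\sqrt{r_n}>0$ yields the asserted domination. For the weak limit I would use positive homogeneity (property~5) to write $\sqrt{r_n}\,\mathcal{T}(\LPP-\LPP_{n,m})=\mathcal{T}\bigl(\sqrt{r_n}(\LPP-\LPP_{n,m})\bigr)$, note that $\sqrt{r_n}(\LPP-\LPP_{n,m})\rightsquigarrow-\LP\1{[0,\nu]}$ by Theorem~\ref{LPP} and the continuous mapping theorem applied to $x\mapsto-x$, and then apply the continuous mapping theorem once more, $\mathcal{T}$ being continuous by property~4. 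This gives the limit $\mathcal{T}(-\LP\1{[0,\nu]})$, which has the same law as $\mathcal{T}(\LP\1{[0,\nu]})=:\mathcal{T}(\LP)$ since the centred Gaussian limit satisfies $-\LP\1{[0,\nu]}\overset{d}{=}\LP\1{[0,\nu]}$.

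It then remains to establish the quantile claims for $Q:=\mathcal{T}(\LP\1{[0,\nu]})$. Finiteness is immediate, since properties~1 and~4 give $|Q|\leq\|\LP\1{[0,\nu]}\|_\infty<\infty$ a.s., the limit living in $C[0,1]$. For positivity, the contrapositive of property~3 yields $\{Q\leq0\}\subseteq\{\LP\1{[0,\nu]}\leq0\text{ on }[0,1]\}$; by symmetry of the centred Gaussian limit and its non-degeneracy (so that $P(\LP\1{[0,\nu]}\equiv0)=0$), the events $\{\LP\1{[0,\nu]}\leq0\}$ and $\{\LP\1{[0,\nu]}\geq0\}$ have equal probability and are almost surely disjoint, whence $P(Q\leq0)\leq1/2<1-\alpha$ for every $\alpha<1/2$. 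Consequently the $(1-\alpha)$ quantile $q_{1-\alpha}$ exceeds $0$. Uniqueness is the delicate point: I would deduce it from the absolute continuity of the law of $Q$ on $(0,\infty)$, which holds because $Q$ is a convex, $1$-Lipschitz (property~4), positively homogeneous functional of a non-degenerate Gaussian process, by the standard regularity results for distributions of such functionals (Tsirelson--Ibragimov--Sudakov). Since $q_{1-\alpha}>0$ lies strictly above the essential infimum of $Q$, the distribution function of $Q$ is continuous and strictly increasing there, so $q_{1-\alpha}$ is unique.

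For part~2, the characterisation~\eqref{comp} gives, under $\mathcal{H}_1$, that $(I-\LPP)(p_0)>0$ for some $p_0\in[0,1]$, hence $\delta:=\mathcal{T}(I-\LPP)>0$ by property~3. Proposition~\ref{P2} gives $\|\LPP-\LPP_{n,m}\|_\infty\to0$ a.s., so property~4 forces $\mathcal{T}(I-\LPP_{n,m})\to\delta>0$ a.s.; since $r_n=nm/(n+m)\to\infty$, it follows that $\sqrt{r_n}\,\mathcal{T}(I-\LPP_{n,m})\to\infty$ a.s., and in particular in probability. The only genuine obstacle in the lemma is the uniqueness/absolute-continuity statement in part~1; everything else follows directly from the listed functional properties and the weak convergence already in hand.
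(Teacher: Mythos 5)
Your proposal is correct and follows essentially the same route as the paper: the domination via property~2 of Proposition~\ref{properties}, positive homogeneity plus the Lipschitz property~4 and the continuous mapping theorem applied to Theorem~\ref{LPP} for the weak limit $\mathcal{T}(\LP)$, and Proposition~\ref{P2} together with $r_n\to\infty$ for part~2. The only difference is presentational: where the paper handles the quantile claims (and, implicitly, the sign issue $-\LP\1{[0,\nu]}\overset{d}{=}\LP\1{[0,\nu]}$) by noting that $\LP$ is a mean-zero Gaussian process and citing Lemma~4 of \cite{bdlorenz}, you unpack that citation explicitly --- symmetry for positivity, the Lipschitz bound for finiteness, and Tsirelson-type absolute continuity of convex Gaussian functionals for uniqueness --- which is precisely the content of the cited lemma.
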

		From a practical point of view, the limit distribution of $\sqrt{r_n}\mathcal{T}(\LPP -\LPP _{n,m})$ under the null hypothesis may be approximated using a bootstrap approach, as discussed in the next subsection.
		
		\subsection{Bootstrap decision rule}
		Let us denote the bootstrap estimators of the empirical CDFs $F_n$ and $G_m$ as 
		$F^*_n$ and $G^*_m$, respectively:
		$$F^*_n(x)=(1/n)\sum\nolimits_{i=1}^n M_i^1\1(x\leq X_i), \qquad G^*_m(x)=(1/m)\sum\nolimits_{i=1}^m M_i^2\1(x\leq Y_i),$$
		where $M^1=(M_1^1,...,M^1_n)$ and $M^2=(M_1^2,...,M^2_m)$ are independent of the data and are drawn from a multinomial distribution according to the chosen sampling scheme. In particular, under the independent-sampling scheme, $M^1$ and $M^2$ are independently drawn from multinomial distributions with uniform probabilities over $n$ and $m$ trials, respectively. Under the matched-pairs scheme, we have $M^1=M^2$ drawn from the multinomial distribution with uniform probabilities over $n=m$ trials, which means that we sample (with replacement) pairs of data, from the $n$ pairs $\{(X_1,Y_1),...,(X_n,Y_n)\}$. 
		Correspondingly, by applying the definitions in Section~\ref{sect 2}, we obtain the bootstrap estimators of the unscaled Lorenz curves, denoted with $L_{F^*_n}$ and $L_{G^*_m}$, as well as the inverse $L^{-1}_{G^*_m}$, and we define 
		$\LPP ^*_{n,m}=L^{-1}_{G^*_m}\circ L_{F^*_n}$.
		As is shown below, the random process $\sqrt{r_n} \ \mathcal{T}(\LPP _{n,m}-\LPP ^*_{n,m})$ has the same limiting distribution as~$\mathcal{T}(\LP) $. Therefore, bootstrap $p$-values are determined by 
		$$p=P\{\sqrt{r_n} \ \mathcal{T}(\LPP _{n,m}-\LPP^* _{n,m})>\sqrt{r_n} \ \mathcal{T}(I-\LPP _{n,m})\},$$
		and can be approximated, based on $K$ bootstrap replicates, by
		$$p\approx (1/K) \sum\nolimits_{k=1}^K \1\{ \sqrt{r_n} \ \mathcal{T}(\LPP _{n,m}-\LPP ^*_{k;n,m})>\sqrt{r_n} \ \mathcal{T}(I-\LPP _{n,m})\},$$
		where $\LPP ^*_{k;n,m}$ is the $k$-th resampled realisation of $\LPP ^*_{n,m}$. As usual, the test rejects $\mathcal{H}_0$ if $p<\alpha$. 
		The asymptotic behaviour of the test is addressed by the following proposition.
		
		\begin{proposition}
			\label{ptest}
			Under Assumptions~\ref{ass1}--\ref{ass2} and the sampling schemes in Section~3.1,
			\begin{enumerate}
				\item If $\mathcal{H}_0$ is true, $\lim_{n\rightarrow\infty}P\{\text{reject }\mathcal{H}_0\}\leq\alpha$; 
				\item If $\mathcal{H}_1$ is true, $\lim_{n\rightarrow\infty}P\{\text{reject }\mathcal{H}_0\}=1$.
			\end{enumerate}
		\end{proposition}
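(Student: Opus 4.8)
The plan is to reduce both parts to the limit theory already established---Lemma~\ref{lemmatest} and Theorem~\ref{LPP}---plus a single new ingredient, the \emph{consistency of the bootstrap}, which is where the actual work lies. First I would restate the decision rule in quantile form. Writing $P^*$ for probability conditional on the data, the $p$-value is $P^*\{\sqrt{r_n}\,\mathcal{T}(\LPP_{n,m}-\LPP^*_{n,m})>\sqrt{r_n}\,\mathcal{T}(I-\LPP_{n,m})\}$, so the event $\{p<\alpha\}$ coincides, up to the behaviour at ties, with $\{\sqrt{r_n}\,\mathcal{T}(I-\LPP_{n,m})>\hat{c}_{n,1-\alpha}\}$, where $\hat{c}_{n,1-\alpha}$ denotes the conditional $(1-\alpha)$ quantile of the bootstrap statistic $\sqrt{r_n}\,\mathcal{T}(\LPP_{n,m}-\LPP^*_{n,m})$. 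Because the limit law will turn out to be continuous, the contribution of ties is asymptotically negligible and the two events agree in the limit.

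The crux is to show that, conditionally on the data and in probability, $\sqrt{r_n}\,\mathcal{T}(\LPP_{n,m}-\LPP^*_{n,m})\rightsquigarrow\mathcal{T}(\LP)$. I would proceed in three steps. (i) The resampling weights $M^1,M^2$ are multinomial, hence an exchangeable bootstrap scheme, so the bootstrap CLT applies and the conditional laws of $\sqrt{n}(F^*_n-F_n)$ and $\sqrt{m}(G^*_m-G_m)$ converge to the same Brownian bridges $\mathcal{B}_1,\mathcal{B}_2$ that drive Theorem~\ref{LPP}, with the joint structure dictated by the sampling scheme (independent bridges in the independent-sampling case, jointly Gaussian with copula $C$ in the matched-pairs case). (ii) I would invoke the functional delta method for the bootstrap together with the Hadamard differentiability of the map $(F,G)\mapsto L_G^{-1}\circ L_F$ already derived in the proof of Theorem~\ref{LPP}; this yields conditional weak convergence of $\sqrt{r_n}(\LPP^*_{n,m}-\LPP_{n,m})$ to $\LP\1{[0,\nu]}$. (iii) Since $\mathcal{T}$ is continuous by property~4) of Proposition~\ref{properties}, the continuous mapping theorem transfers this to $\mathcal{T}(\LP)$. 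Combined with the positivity, finiteness and uniqueness of the $(1-\alpha)$ quantile of $\mathcal{T}(\LP)$ from Lemma~\ref{lemmatest}(1)---which makes the limiting c.d.f.\ continuous and strictly increasing at that quantile---this forces $\hat{c}_{n,1-\alpha}\rightarrow_p c_{1-\alpha}$, the $(1-\alpha)$ quantile of $\mathcal{T}(\LP)$.

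Part~1 then follows directly. Under $\mathcal{H}_0$, Lemma~\ref{lemmatest}(1) gives $\sqrt{r_n}\,\mathcal{T}(I-\LPP_{n,m})\leq\sqrt{r_n}\,\mathcal{T}(\LPP-\LPP_{n,m})\rightsquigarrow\mathcal{T}(\LP)$, so the rejection event is contained in $\{\sqrt{r_n}\,\mathcal{T}(\LPP-\LPP_{n,m})>\hat{c}_{n,1-\alpha}\}$; combining the weak convergence of this dominating statistic with $\hat{c}_{n,1-\alpha}\rightarrow_p c_{1-\alpha}$ via Slutsky's theorem yields
$$\limsup_{n\to\infty}P\{\text{reject }\mathcal{H}_0\}\leq\limsup_{n\to\infty}P\{\sqrt{r_n}\,\mathcal{T}(\LPP-\LPP_{n,m})>\hat{c}_{n,1-\alpha}\}=P\{\mathcal{T}(\LP)>c_{1-\alpha}\}=\alpha,$$
where the last two equalities use continuity of the c.d.f.\ of $\mathcal{T}(\LP)$ at $c_{1-\alpha}$. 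For Part~2 I would note that the bootstrap statistic is centred at the empirical $\LPP_{n,m}$, so its conditional weak limit $\mathcal{T}(\LP)$ exists under \emph{both} hypotheses---it depends only on the regularity of the underlying $F,G$ granted by Assumptions~\ref{ass1}--\ref{ass2}, not on whether SSD holds---whence $\hat{c}_{n,1-\alpha}=O_p(1)$ even under $\mathcal{H}_1$. Since Lemma~\ref{lemmatest}(2) gives $\sqrt{r_n}\,\mathcal{T}(I-\LPP_{n,m})\rightarrow_p\infty$, the rejection probability $P\{\sqrt{r_n}\,\mathcal{T}(I-\LPP_{n,m})>\hat{c}_{n,1-\alpha}\}$ tends to~$1$. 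The main obstacle throughout is steps~(i)--(ii): rigorously establishing conditional (bootstrap) weak convergence of the resampled LPP process uniformly over both sampling schemes and, crucially, verifying that the Hadamard derivative used in Theorem~\ref{LPP} is of the type required by the delta method for the bootstrap---tangential to the correct subspace---so that the bootstrap limit genuinely coincides with $\mathcal{T}(\LP)$.
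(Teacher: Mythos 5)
Your proposal is correct and follows essentially the same route as the paper: conditional weak convergence of the bootstrap empirical processes (multinomial/exchangeable bootstrap CLT, as in Lemma~5.2 of \cite{sunbeare}), the functional delta method for the bootstrap applied to the Hadamard-differentiable maps already established in the proof of Theorem~\ref{LPP}, the continuous mapping theorem to obtain $\mathcal{T}(\LP)$ (the paper makes explicit the sign step $\mathcal{T}(-\LP)=_d\mathcal{T}(\LP)$, which you gloss but which holds by symmetry of the centered Gaussian process), convergence of the bootstrap critical value to the $(1-\alpha)$ quantile of $\mathcal{T}(\LP)$, and finally Lemma~\ref{lemmatest} for both parts. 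Your added remarks---that the bootstrap limit holds under both hypotheses so the critical value stays $O_p(1)$ under $\mathcal{H}_1$, and the tie-handling in the $p$-value formulation---are details the paper leaves implicit, not a different method.
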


		\section{Extension to fractional-degree SD}\label{sect fraction}
		
		An important topic in SD theory is represented by SD relations that are ``between'' FSD and SSD. This is motivated by the fact that FSD is a strong requirement, but, on the other hand, SSD corresponds to total risk-aversion, which is quite restrictive in some cases \citep{muller2017}. There are different ways to define classes of orders that interpolate between FSD and SSD, 
		and each leads to a different family of SD relations, typically parametrised by some real number that represents the strength of the dominance. The first attempt in this direction is ascribable to \cite{fishburn}, who used fractional-degree integration to interpolate the classic $k$-th order SD at all integer orders $k\geq1$. More recently, \cite{muller2017}, \cite{huang2020}, and \cite{lando} proposed different parametrizations, with different interpretations and properties, which coincide with classic SD only at orders 1 and 2. In this section, we introduce a simple but very general family of fractional-degree orders, which have the advantage that they can be easily tested using the LPP method discussed earlier. Such a family can be defined as follows. 
		
		Let $\mathcal{U}$ be the family of increasing absolutely continuous functions $ u $ over the non-negative half line. Under an economic perspective, $u$ may be understood as a {utility} function, assigning values to monetary outcomes. For some $u\in \mathcal{U}$, we say that $X$ dominates $Y$ with respect to $u$-\textit{transformed stochastic dominance} ($u$-TSD), and write $X\geq^T_u Y$, if $u(X)\geq_2 u(Y)$.
		TSD has been studied by \cite{meyer1977}, who denoted it as SSD with respect to $u$, and by \cite{huang2020}, who focused on a particular parametric choice of $u$. In fact, since $u$-TSD represents SSD between the transformed random variables $u(X)$ and $u(Y)$, then it can be simply expressed through the LPP of $u(X)$ and $u(Y)$. 
		
		The behaviour of TSD clearly depends on the choice of $u$. To understand this behaviour, let $u,\tilde{u}\in\mathcal{U}$ be two transformation functions defined on the same interval. Generalizing 
		\cite{chan1990}, we say that $u$ is \textit{more convex} than $\tilde{u}$ and write $u \geq_c \tilde{u}$ iff $u \circ \tilde{u}^{-1}$ is convex. The following theorem shows that TSD can be equivalently expressed in terms of expected utilities, thus generalizing Theorem 1 of \cite{huang2020}.
		\begin{theorem} \label{TT}
			$X\geq^T_{u} Y$ if and only if $\E(\phi(X))\geq\E(\phi(Y))$, for every increasing utility $\phi$ such that $u\geq_c \phi.$
		\end{theorem}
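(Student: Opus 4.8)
The plan is to reduce the statement to the classical expected-utility characterization of SSD, applied not to $X$ and $Y$ directly but to the transformed variables $u(X)$ and $u(Y)$, and then to convert the relevant class of test functions via a change of variables. Recall that, by definition, $X\geq^T_u Y$ means $u(X)\geq_2 u(Y)$, and that $A\geq_2 B$ holds if and only if $\E\psi(A)\geq\E\psi(B)$ for every increasing concave $\psi$ (see, e.g., \citet[Ch.~4]{shaked}). Applying this with $A=u(X)$ and $B=u(Y)$ gives that $X\geq^T_u Y$ is equivalent to $\E\psi(u(X))\geq\E\psi(u(Y))$ for every increasing concave $\psi$. It therefore suffices to match the family of such $\psi$, composed with $u$, against the family of increasing $\phi$ satisfying $u\geq_c\phi$.

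First I would set up the correspondence $\phi=\psi\circ u$, equivalently $\psi=\phi\circ u^{-1}$, and show it is a bijection between the increasing concave functions $\psi$ (on the range of $u$) and the increasing functions $\phi$ with $u\geq_c\phi$. In one direction, given increasing concave $\psi$, the composition $\phi=\psi\circ u$ is increasing; computing $\phi^{-1}=u^{-1}\circ\psi^{-1}$ yields $u\circ\phi^{-1}=\psi^{-1}$, which is convex because the inverse of an increasing concave function is increasing convex, so $u\geq_c\phi$. In the other direction, given increasing $\phi$ with $u\geq_c\phi$, i.e.\ $u\circ\phi^{-1}$ convex, setting $\psi=\phi\circ u^{-1}$ gives $\psi^{-1}=u\circ\phi^{-1}$ convex, hence $\psi$ is increasing concave. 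The two assignments are mutually inverse since $u\circ u^{-1}$ and $u^{-1}\circ u$ act as the identity on the relevant ranges.

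With the bijection in hand, the proof assembles as a chain of equivalences: using $\E\phi(X)=\E\psi(u(X))$ and $\E\phi(Y)=\E\psi(u(Y))$ whenever $\phi=\psi\circ u$, the condition ``$\E\psi(u(X))\geq\E\psi(u(Y))$ for all increasing concave $\psi$'' is identical to ``$\E\phi(X)\geq\E\phi(Y)$ for all increasing $\phi$ with $u\geq_c\phi$''. Combined with the two equivalences recalled in the first paragraph, this proves the theorem.

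The main obstacle I anticipate is purely technical: making the compositions and inverses rigorous when $u$ and $\psi$ are only increasing and absolutely continuous rather than strictly increasing and smooth, so that one must work with the generalised inverses of Section~\ref{sect 2} and verify that $u\circ u^{-1}$ (resp.\ $u^{-1}\circ u$) reduces to the identity on the support of the transformed variables, and that convexity and concavity are genuinely preserved under inversion and composition for such generalised inverses. A secondary point is to confine attention to those $\phi$ (equivalently $\psi$) for which the expectations are finite, so that the expected-utility characterization of SSD legitimately applies to $u(X)$ and $u(Y)$; this is where the finite-mean hypotheses on $X$ and $Y$ enter.
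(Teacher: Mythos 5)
Your proposal is correct and follows essentially the same route as the paper's own proof: reduce $X\geq^T_u Y$ to $u(X)\geq_2 u(Y)$ by definition, invoke the classical expected-utility characterisation of SSD, and identify the class of increasing $\phi$ with $u\geq_c\phi$ via the substitution $\phi=\psi\circ u$. The paper's proof is just a terser version of yours, stating the correspondence $\phi=g\circ u$ without spelling out the bijection between increasing concave $\psi$ and increasing $\phi$ with $u\geq_c\phi$, which you verify explicitly.
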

		
		It is easy to see that, if $u$ and $\phi$ are twice differentiable,  the condition $u\geq_c \phi$ is equivalent to $\rho_\phi(x)\geq \rho_u(x),\forall x$, where $\rho_g(x)={g''(x)}/{g'(x)}$ is the Arrow-Pratt index of absolute risk aversion associated with the utility function $g$.  Moreover, the following general properties hold.
		
		\begin{theorem} \label{T7}
			\begin{enumerate}\
				\item If $u_1\geq_c u_2$ then  $X\geq^T_{u_1} Y\implies X\geq^T_{u_2} Y$;
				\item $X\geq_1 Y$ if and only if $X\geq^T_{u} Y,\forall u\in\mathcal{U}$.
			\end{enumerate}
		\end{theorem}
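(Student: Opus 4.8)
The plan is to reduce part 1 to the expected-utility characterization of Theorem~\ref{TT} together with the transitivity of the relation $\geq_c$, and to prove part 2 directly from the definitions of FSD and SSD. First I would record that $\geq_c$ is transitive: if $u_1 \geq_c u_2$ and $u_2 \geq_c u_3$, then writing $u_1 \circ u_3^{-1} = (u_1 \circ u_2^{-1}) \circ (u_2 \circ u_3^{-1})$ and invoking the fact that the composition of a nondecreasing convex function with a convex function is convex gives $u_1 \geq_c u_3$. Here the inner map $u_2 \circ u_3^{-1}$ is convex by hypothesis, while the outer map $u_1 \circ u_2^{-1}$ is convex by hypothesis and nondecreasing because both $u_1$ and $u_2^{-1}$ are increasing. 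This argument uses only convexity and monotonicity, no differentiability, so it applies to all of $\mathcal{U}$.

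For part 1, suppose $u_1 \geq_c u_2$ and $X \geq^T_{u_1} Y$. By Theorem~\ref{TT} it suffices to show $\E \phi(X) \geq \E\phi(Y)$ for every increasing $\phi$ with $u_2 \geq_c \phi$. Fix such a $\phi$. By transitivity, $u_1 \geq_c u_2 \geq_c \phi$ gives $u_1 \geq_c \phi$, and since $X \geq^T_{u_1} Y$, Theorem~\ref{TT} yields $\E\phi(X) \geq \E\phi(Y)$. As $\phi$ was an arbitrary increasing function dominated by $u_2$ in the $\geq_c$ order, the same theorem gives $X \geq^T_{u_2} Y$.

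For part 2 I would argue the two implications separately. For the forward implication, assume $X \geq_1 Y$ and fix any $u \in \mathcal{U}$. For every increasing concave $w$, the composition $w \circ u$ is increasing, so $X \geq_1 Y$ gives $\E (w \circ u)(X) \geq \E (w \circ u)(Y)$, that is $\E w(u(X)) \geq \E w(u(Y))$; since $w$ was an arbitrary increasing concave function, $u(X) \geq_2 u(Y)$, i.e. $X \geq^T_u Y$. For the converse, assume $X \geq^T_u Y$ for all $u \in \mathcal{U}$. Taking the increasing concave (indeed linear) function $w(t) = t$ in the definition of SSD, $u(X) \geq_2 u(Y)$ implies $\E u(X) \geq \E u(Y)$, so this inequality holds for every $u \in \mathcal{U}$. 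To recover $X \geq_1 Y$, I would fix $x_0 \geq 0$ and approximate the indicator $\1(t>x_0)$ from below by the bounded, increasing, piecewise-linear (hence absolutely continuous) functions $u_\delta \in \mathcal{U}$ equal to $0$ on $[0,x_0]$, rising linearly to $1$ on $[x_0, x_0+\delta]$, and equal to $1$ thereafter. Applying the inequality to $u_\delta$ and letting $\delta \to 0$, dominated convergence gives $P(X > x_0) \geq P(Y > x_0)$, i.e. $F(x_0) \leq G(x_0)$; as $x_0$ is arbitrary this is exactly $X \geq_1 Y$.

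I expect the main obstacle to be the two places where one must be careful rather than merely formal. The first is the transitivity of $\geq_c$: the composition rule for convex functions requires the outer function to be nondecreasing, which is why it is essential that every element of $\mathcal{U}$ is increasing, and one must also check that the generalized inverses compose correctly on the relevant domains. The second is the approximation step in the converse of part 2: because $\mathcal{U}$ contains only absolutely continuous functions, indicator test functions are not directly admissible, so FSD cannot be read off immediately and one must instead pass through the bounded approximants $u_\delta$ and justify the limit. Both steps are standard, but they are where the genuine work lies.
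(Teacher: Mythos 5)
Your proof is correct, and it differs from the paper's in a way worth spelling out, mainly on part 1. The paper argues directly: from $u_1\geq_c u_2$ it deduces that $u_2\circ u_1^{-1}$ is increasing and concave, and applies it to $u_1(X)\geq_2 u_1(Y)$; as written, the paper only displays the single inequality $\E(u_2\circ u_1^{-1}\circ u_1(X))=\E(u_2(X))\geq\E(u_2(Y))$, leaving the reader to supply the missing step (composing further with an arbitrary increasing concave $g$, i.e.\ the preservation of SSD under increasing concave maps) needed to conclude the full statement $u_2(X)\geq_2 u_2(Y)$ rather than just the mean inequality. Your route through Theorem~\ref{TT} sidesteps this: you show that the family of test functions $\{\phi: u_2\geq_c\phi\}$ is contained in $\{\phi: u_1\geq_c\phi\}$ via transitivity of $\geq_c$, and the conclusion is immediate; the price is the transitivity lemma (a nondecreasing convex function composed with a convex function is convex), which is the mirror image of the paper's concave-inverse fact, so the two arguments rest on the same convexity calculus but yours closes the loop completely. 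On part 2 the proofs agree in substance: the paper appeals to the assertion that ``any increasing function may be approximated by a sequence of functions in $\mathcal{U}$,'' while you instantiate this with the ramp functions $u_\delta$ approximating $\1(t>x_0)$ from below, which is exactly what is needed to recover $F(x_0)\leq G(x_0)$ pointwise and makes the limiting step (bounded functions, dominated or monotone convergence) checkable rather than asserted. Finally, the caveat you flag about generalized inverses of merely non-decreasing elements of $\mathcal{U}$ not composing to the identity applies equally to the paper's own proof, which uses $u_1^{-1}\circ u_1=\mathrm{id}$ without comment, so this is a shared (and minor) gap in rigor, not a defect specific to your argument.
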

		Intuitively, the degree of convexity of the function $u$ determines the strength of the SD relation, and SSD is obtained by taking $u$ to be the identity function, whereas FSD is obtained when $u$ is infinitely ``steep''.
		
		Families of utility functions within $\mathcal{U}$ can be obtained easily by composing the quantile function and the CDF of two absolutely continuous random variables. 
		For example, one may consider the class of utility functions studied by \cite{huang2020} and given by $u_c(x)=\exp{((1/c-1)x)},$ for $c\in(0,1)$. 
		Since this paper deals with tests for non-negative random variables, we focus on a simpler choice, that is $u_\theta(x)=x^{\theta}$, with $\theta\geq0$. Correspondingly, hereafter we denote the ordering relation $X\geq^T_{u_\theta} Y$ with $X\geq^T_{1+1/\theta} Y$, thus yielding a continuum of SD relations that get stronger and stronger as $\theta$ grows. By Theorem~\ref{T7}, this order is characterised by those utility functions that have an Arrow-Pratt index larger than or equal to ${(\theta-1)}/{x}$. 
		
		Since $X\geq^T_{1+1/\theta} Y$ is equivalent to $X^\theta \geq_{2} Y^\theta$, a test for $\mathcal{H}_0^{1+1/\theta}:X\geq^T_{1+1/\theta} Y$ versus $\mathcal{H}_1^{1+1/\theta}: X\not\geq^T_{1+1/\theta} Y$ is readily obtained by applying our method to the LPP of the transformed random samples $\mathcal{X}^\theta$ and $\mathcal{Y}^\theta$. 
		In particular, we consider the \textit{generalised LPP}, given by $\widetilde{\LPP}_{n,m}^\theta=(\widetilde{L}^\theta_{G_m})^{-1}\circ \widetilde{L}^\theta_{F_n}$, where $\widetilde{L}^\theta_{F_n}$ and $\widetilde{L}^\theta_{G_m}$ are the empirical (step-valued) unscaled Lorenz curves corresponding to the transformed samples $\{X_i^{\theta}: i=1,...,n\}$, and $\{ Y_j^{\theta}: j=1,...,m \}$. $\widetilde{\LPP}_{n,m}^\theta$ is a generalised P-P plot, in that it coincides with $\widetilde{\LPP}_{n,m}$ for $\theta=1$. More interestingly, we prove that, as $\theta\rightarrow\infty$,  $\widetilde{\LPP}_{n,m}^\theta$ tends to the classic P-P plot of the non-transformed samples, that is to $G_m\circ F_n^{-1}$, as depicted in Figure~\ref{f6}. In particular, one may always find some $\theta$ large enough such that the two P-P plots coincide, meaning that our tests may be also applied to FSD, expressed as $G\circ F^{-1}(x)\geq x$. (tests for FSD based on the P-P plot have been studied, e.g., by \cite{davidov} and \cite{bearenew}). In fact, this idea is coherent with the intuition that, for $\theta\rightarrow\infty$, the stochastic inequality $X\geq^T_{1+1/\theta} Y$ reduces to $X\geq_{1} Y$, expressed as $F(x)\leq G(x),\forall{x}$, as formally established in the following theorem.

		\newpage
		\begin{theorem} \label{T8}
			\begin{enumerate}\
				\item For $\theta\rightarrow\infty$, $X\geq_1 Y$ if and only if $X\geq^T_{1+1/\theta} Y$;
				\item There exists some $\theta_0$ such that, for $\theta>\theta_0$, the generalised LPP coincides with the classic P-P plot, that is, $\widetilde{\LPP }_{n,m}^\theta=G_m\circ F_n^{-1}$.
			\end{enumerate}
		\end{theorem}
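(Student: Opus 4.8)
\emph{Part 1.} Recall that $X\geq^T_{1+1/\theta} Y$ means $X^\theta\geq_2 Y^\theta$, and since $x\mapsto x^\theta$ is increasing the quantile of $X^\theta$ is $(F^{-1})^\theta$; hence, by the unscaled-Lorenz characterisation of SSD (recall $X\geq_2 Y\iff L_F\geq L_G$), the relation is equivalent to $\int_0^p (F^{-1}(t))^\theta\,dt\geq\int_0^p (G^{-1}(t))^\theta\,dt$ for all $p\in[0,1]$. The forward implication is immediate: if $X\geq_1 Y$ then $X^\theta\geq_1 Y^\theta$ for every $\theta$ (FSD is preserved under increasing maps), and FSD implies SSD, so $X\geq^T_{1+1/\theta} Y$ for all $\theta$; this also follows from Theorem~\ref{T7}(2). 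For the converse I argue by contraposition: assuming $X\geq^T_{1+1/\theta} Y$ for arbitrarily large $\theta$, I show $X\geq_1 Y$.

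Suppose instead $X\not\geq_1 Y$, so $F(x_0)>G(x_0)$ for some $x_0$. Then on the interval $(G(x_0),F(x_0))$, which has positive length, one has $G^{-1}(t)>x_0\geq F^{-1}(t)$, hence $G^{-1}(t)>F^{-1}(t)$ there. Fix any $p$ in this interval and put $A=G^{-1}(p)>B=F^{-1}(p)$. The plan is a Laplace-type concentration argument exploiting that both integrands are increasing in $t$, so their largest value on $[0,p]$ is attained at the endpoint $p$. Since $F^{-1}\leq B$ on $[0,p]$, we get $\int_0^p (F^{-1})^\theta\,dt\leq p\,B^\theta$; using left-continuity of $G^{-1}$, choose $\delta>0$ with $A-\delta>B$ and $\eta>0$ with $G^{-1}\geq A-\delta$ on $[p-\eta,p]$, giving $\int_0^p (G^{-1})^\theta\,dt\geq \eta\,(A-\delta)^\theta$. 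The ratio is therefore at least $(\eta/p)\,((A-\delta)/B)^\theta\to\infty$, so for all large $\theta$ the Lorenz inequality fails at $p$, i.e.\ $X\not\geq^T_{1+1/\theta} Y$, contradicting the hypothesis. (Consistently, by the monotonicity in Theorem~\ref{T7}(1), since $u_\theta\geq_c u_{\theta'}\iff\theta\geq\theta'$, this failure persists for all larger $\theta$.) The only delicate point is that SSD integrates from $0$, so a quantile crossing need not be visible in $\int_0^p$ at small $\theta$; the resolution is exactly to place $p$ inside the crossing interval and let the $\theta$-th power concentrate the mass near the top of $[0,p]$, where $G^{-1}$ strictly dominates $F^{-1}$.

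\emph{Part 2.} I work conditionally on the fixed samples; almost surely all $X_{(i)}$ and $Y_{(j)}$ are distinct and positive. The mechanism is that, as $\theta\to\infty$, a partial sum of $\theta$-th powers is dominated by its largest summand: $\sum_{i=1}^{k}X_{(i)}^\theta=X_{(k)}^\theta\,(1+o(1))$ because $X_{(i)}/X_{(k)}<1$ for $i<k$, and likewise for the $Y$'s. Unwinding the definitions of $\widetilde{L}^\theta_{F_n}$, $\widetilde{L}^\theta_{G_m}$ and its inverse, the value of $\widetilde{\LPP}_{n,m}^\theta$ at a node $k/n$ equals $\tfrac1m\max\{\,j:\ \sum_{i\leq j}Y_{(i)}^\theta\leq \tfrac{m}{n}\sum_{i\leq k_0}X_{(i)}^\theta\,\}$, where $k_0$ is the index selected by the Lorenz convention. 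Substituting the domination, the defining inequality reduces, up to factors tending to $1$, to $(Y_{(j)}/X_{(k_0)})^\theta\leq m/n$, which holds for large $\theta$ precisely when $Y_{(j)}<X_{(k_0)}$ and fails when $Y_{(j)}>X_{(k_0)}$. Hence the node value converges to $\tfrac1m\#\{j:\ Y_{(j)}\leq X_{(k_0)}\}$, which is exactly $G_m\circ F_n^{-1}$ at that node.

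It remains to make the threshold uniform in $\theta$. There are only finitely many pairs $(i,j)$, and by distinctness each comparison $Y_{(j)}$ versus $X_{(i)}$ is strict, so each inequality $(Y_{(j)}/X_{(i)})^\theta\leq m/n$ flips at a finite value of $\theta$; taking $\theta_0$ to be the maximum of these finitely many thresholds guarantees that for every $\theta>\theta_0$ all node values simultaneously coincide with the P-P plot. Since both $\widetilde{\LPP}_{n,m}^\theta$ and $G_m\circ F_n^{-1}$ are step functions that are constant between consecutive nodes $i/n$, agreement at the nodes upgrades to equality as functions. I expect the main nuisance here to be controlling the $o(1)$ partial-sum corrections at \emph{finite} $\theta$ so that they cannot reverse a strict comparison; this is harmless because the neglected terms are geometrically smaller, being ratios of consecutive order statistics raised to the $\theta$-th power, and any off-by-one arising from the floor/ceiling conventions in the empirical quantile and the unscaled-Lorenz definitions is absorbed by applying the same convention on both sides of the claimed identity.
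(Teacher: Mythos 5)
Your proposal is correct, but it is a hybrid: Part~2 follows essentially the paper's own argument, while Part~1 takes a genuinely different route. For Part~2 you use exactly the paper's mechanism: partial sums of $\theta$-th powers are dominated by their largest summand, so each of the finitely many comparisons between $\sum_{i\le j}Y_{(i)}^\theta$ and $\tfrac{m}{n}\sum_{i\le k_0}X_{(i)}^\theta$ stabilises, beyond a finite threshold, to the comparison of $Y_{(j)}$ with $X_{(k_0)}$ (a.s.\ strict, by distinctness of the order statistics); taking the maximum of the finitely many thresholds and using the common step-function structure gives the identity, just as in the paper (whose own write-up of this step is, if anything, terser than yours).

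For Part~1 the paper stays on the CDF side: it writes the TSD condition as $\int_0^x F\,du_\theta \le \int_0^x G\,du_\theta$ for all $x$, normalises by $u_\theta(x)$, and shows by dominated convergence that $\int_0^x H\,du_\theta/u_\theta(x)=H(x)-\int_0^{H(x)}\left(H^{-1}(y)/x\right)^\theta dy \to H(x)$, so the family of TSD inequalities passes in the limit to $F\le G$; the forward direction is Theorem~\ref{T7}(2). You instead work on the quantile side and argue by contraposition: a violation $F(x_0)>G(x_0)$ yields an interval where $G^{-1}>F^{-1}$ strictly (this does hold, by right-continuity of $G$, though it deserves a line of justification), and your Laplace-type bound $\int_0^p (G^{-1})^\theta dt \,/\, \int_0^p (F^{-1})^\theta dt \ge (\eta/p)\left((A-\delta)/B\right)^\theta \to \infty$ shows that the transformed Lorenz ordering fails at $p$ for \emph{every} sufficiently large $\theta$. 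Your route buys an explicit quantitative conclusion — failure of $\geq^T_{1+1/\theta}$ for all large $\theta$, not merely a statement about limits of the normalised integrals — which dovetails nicely with the persistence remark you make via Theorem~\ref{T7}(1); the paper's route buys compactness and avoids quantile-side regularity discussions entirely. Two small points to tidy in your version: (i) the degenerate case $B=F^{-1}(p)=0$ should be handled separately (then the $F$-side integral vanishes while the $G$-side one is positive, so the conclusion is immediate rather than via the ratio); (ii) you implicitly invoke the Lorenz characterisation \eqref{comp} for $X^\theta,Y^\theta$, whose means need not be finite even under the paper's moment assumptions — harmless, since you only compare the curves at a fixed $p<1$, but worth a word.
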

		
			\begin{figure}
			\centering
			\includegraphics[width=.9\textwidth]{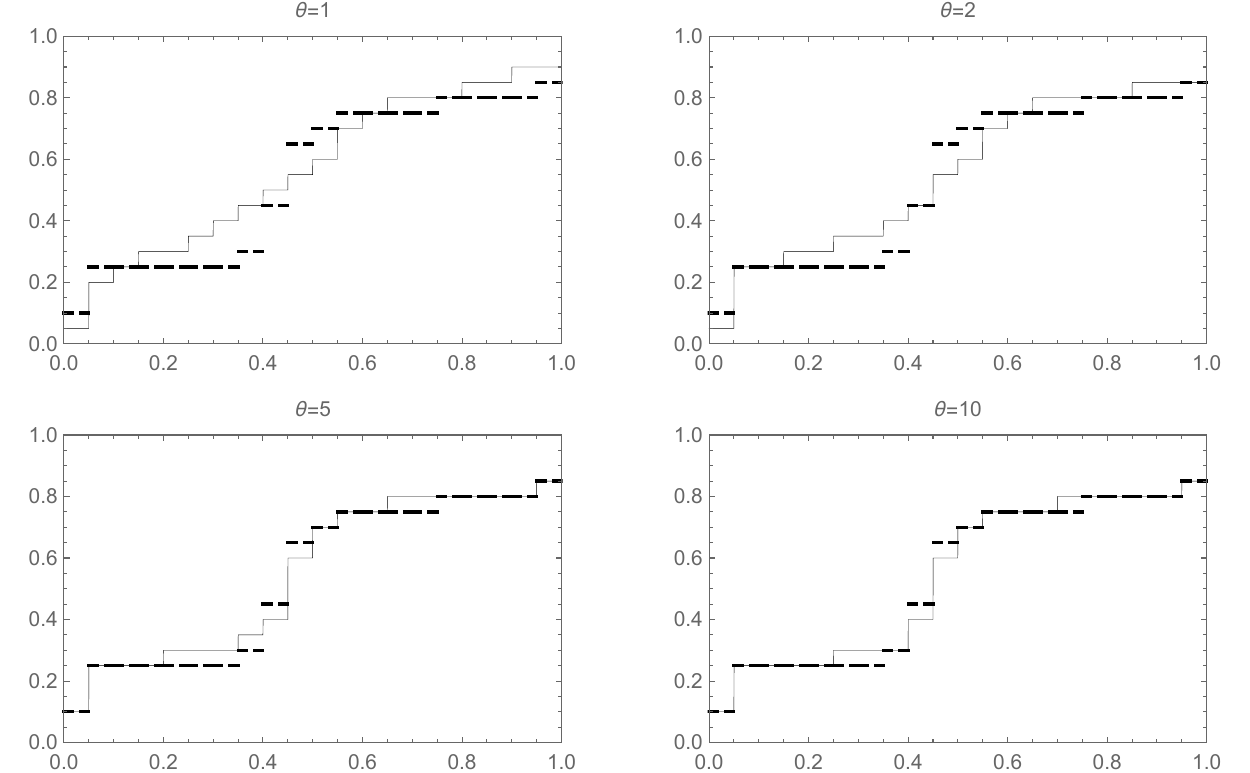}
			\caption{The P-P plot (dashed) of two samples of size $n=m=20$ versus the generalised LPP $\widetilde{\LPP }_{n,m}^\theta$ (solid), for $\theta=1,2,5,10$. In this example, the plots coincide for $\theta\geq48$.\label{f6}}
		\end{figure}
		
		To test FSD as a limit case of TSD, one should choose a value of $\theta$ that ensures the result above. However, if $\theta$ is too large, computations may be difficult, depending on the precision of the software used. We recommend using $\theta=50$, which corresponds to testing $\geq^T_{1.02}$, for a good approximation of FSD.

		\section{Simulations}\label{sect sim}
		We perform numerical analyses to investigate the finite-sample properties of the proposed tests. In all simulations, we consider a significance level $ \alpha=0.1 $, and run $ 500 $~experiments, with $ 500 $~bootstrap replicates for each experiment. For simplicity, we set $n=m$, so henceforth we will drop the subscript $m$. Namely, we consider $n=m=50,100,200,500,1000$. The shift $\epsilon$ is set to $ 10^{-4} $, as discussed in Section~\ref{sect 4}.		
		All computations have been performed in R, and the code is openly available at \url{https://github.com/siriolegramanti/SSD}.
		
		In light of Proposition~\ref{P1}, instead of ${\LPP }_{n}$ we use $\widetilde{\LPP }_{n}$, which can be computed faster. Accordingly, we consider two different test statistics, namely $\mathcal{T}_\infty(I-\widetilde{\LPP }_{n})$ and $\mathcal{T}_1(I-\widetilde{\LPP }_{n})$; see Section~\ref{construction}. For $n=m$, $\mathcal{T}_\infty$ and $\mathcal{T}_1$ can be rewritten, respectively, as
		$$\mathcal{T}_\infty(I-\widetilde{\LPP }_{n})=\max_i \left( \frac in - \widetilde{\LPP }_{n}\left( \frac in \right) \right), \qquad \mathcal{T}_1(I-\widetilde{\LPP }_{n})= \sqrt{n} \frac{1}{n} \sum\nolimits_{i=1}^n \Psi\left(\frac{2i-1}{2n}\right), $$
		where $ \Psi(t)=(t-\widetilde{\LPP }_{n}(t))_+ $.
		Our results are compared with those obtained from the tests of \cite{bd}, which represent the state of the art for SSD tests. In particular, \cite{bd} propose three bootstrap-based tests, based on a least favourable configuration, denoted as KSB1, KSB2, and KSB3, which differ just for the bootstrap method employed to simulate the $p$-values. We focus on KSB3 since it is based on the approach that is most similar to ours. Moreover, KSB3 seems to provide the best results compared to KSB1 and KSB2 as far as concerns SSD; see tables II-A and II-B of \cite{bd}. The $p$-values of KSB3 were computed using a grid of evenly spaced values $t_1<\ldots<t_r$, where $ t_1 $ and $t_r$ are the smallest and the largest values in the pooled sample, respectively. As for the number of grid points, we set $r=100$ as in \cite{bd}, but we did not notice substantial differences in increasing $ r $.
		
		Note that one pair of distributions gives rise to two different hypothesis tests. In fact, one may test $\mathcal{H}_0: F\geq_2 G$ versus $\mathcal{H}_1: F\not\geq_2 G$, but also the reverse hypothesis test, denoted as $\mathcal{H}^R_0: G\geq_2 F$ versus $\mathcal{H}^R_1: G\not\geq_2 F$. Except for the trivial case $F=G$, if $\LPP$ does not cross the identity we may have that $\mathcal{H}_0$ is true while $\mathcal{H}^R_0$ is false, or vice versa; differently, if $\LPP$ crosses the identity, $\mathcal{H}_0$ and $\mathcal{H}^R_0$ are both false.

		\subsection{Size properties}\label{sect size}
		To investigate the behaviour of the tests under the null hypothesis, we simulate samples from the Weibull family, denoted by $W(a,b)$, with CDF $F_W(x;a,b)=1-\exp\{-\left({x}/{b}\right)^a\}$. Since the mean of a $W(a,b)$ is $b/ q_a $, where $q_a=1/{\Gamma(1+ 1/a)}$, we let $F\sim W(a,q_a)$, for $a=1,1.25,1.5,1.75,2$, and fix $G\sim W(1,1)$. All these distributions have mean 1, and in all these cases $\mathcal{H}_0$ holds. Clearly, for $a=1$ we have $F=G$, whereas the dominance of $ F $ over $ G $ becomes stronger, and more apparent, for larger values of $a$. 
		
		The results in Tables~\ref{t1}, \ref{tw1l}, \ref{tw2l} and \ref{tw3l} confirm that the proposed tests, both with $\mathcal{T}_\infty$ and $\mathcal{T}_1$, behave as described in Proposition~\ref{ptest}, part~1. Namely, the rejection rate tends to be bounded by $\alpha=0.1$ under $\mathcal{H}_0$. More specifically, we observe that the rejection rate of the proposed tests tends to $\alpha$ when $F=G$ (see Table~\ref{t1}), while it tends to 0 when $ F $ strictly dominates $ G $ (see Tables~\ref{tw1l}, \ref{tw2l} and \ref{tw3l}). The rejection rate for the KSB3 test by \cite{bd} is also asymptotically bounded by $ \alpha $ but, when the dominance is stronger, it is still about $\alpha$ for $ n=1000 $. For such a sample size, the rejection rate of both the proposed tests has already reached 0.

		\subsection{Power properties}
		We now investigate the behaviour of the tests under $\mathcal{H}_1$. In particular, we focus on cases in which $F$ is dominated by $G$, so that $\mathcal{H}_0$ should be rejected quite easily since $\LPP$ is always below the identity. As we discuss in~\ref{sub wei}, the three tests considered behave quite similarly in such cases. We also focus on critical cases in which neither of the two distributions dominates the other, and therefore $\LPP$ crosses the identity. In particular, the most critical situation for our class of tests is when $\LPP$ is above the identity except for a small interval (see Figure~\ref{f4}). 
		The simulation results in~\ref{LNmix cases} and~\ref{SM cases} show that, in some of the most difficult cases, $\mathcal{T}_1$ and KSB3 struggle to reject $\mathcal{H}_0$, whereas the proposed $\mathcal{T}_\infty$ test stands out as the most reliable.
		
		\subsubsection{Weibull distribution}\label{sub wei}
		Using the same distributions as in Section~\ref{sect size}, except for the case $F=G$, we have that $F>_2 G$ (strictly) and therefore $G\not\geq_2 F$. In these cases, $\LPP$ is always above the identity. The results, reported in Tables~\ref{tw1r}, \ref{tw2r} and \ref{tw3r}, show that the power of the tests increases with the sample size. In particular, $\mathcal{T}_1$ seems to outperform $\mathcal{T}_\infty$ for smaller sample sizes, while both the proposed $\mathcal{T}_1$ and $\mathcal{T}_\infty$ tests provide larger power compared to the KSB3 test by \cite{bd}.

		\subsubsection{Lognormal mixture vs. lognormal distribution}
		\label{LNmix cases}
		
		As a more critical example, we focus on a special case considered by {\citet[][Case~5]{bd}. Here, $F$ is a mixture of lognormal distributions, namely $F=0.9F_{LN}(\cdot;0.85;0.4)+0.1F_{LN}(\cdot;0.4,0.4)$, 
			whereas $G=F_{LN}(\cdot;0.86,0.6)$. These CDFs cross multiple times, and also $\LPP$ crosses the identity from below so that $F\not\geq_2 G$ but also $G\not\geq_2 F$. In other words, both $\mathcal{H}_0$ and $\mathcal{H}^R_0$ are false. In the latter case, the null hypothesis is hard to reject, because $\LPP$ crosses the identity from above, and it exceeds the identity just in a small subset of the unit interval. Note that \cite{bd} just apply their test to $\mathcal{H}_0$ versus~$\mathcal{H}_1$, overlooking the reverse situation $\mathcal{H}^R_0$ versus $\mathcal{H}^R_1$. 
			As illustrated in Table~\ref{tln}, KSB3 seems to outperform our tests in detecting $\mathcal{H}_1$. In particular, $\mathcal{T}_1 $ exhibits quite a poor performance with the sample sizes considered (to increase its power up to 0.68, we need to reach $n=5000$). Conversely, KSB3 has a really poor performance in rejecting~$\mathcal{H}^R_0$, while the proposed $\mathcal{T}_\infty $ and $\mathcal{T}_1 $ tests provide a large power in this critical setting.
			
			\subsubsection{Singh-Maddala Distribution}
			\label{SM cases}	
			As a third case, let us consider the Singh-Maddala distribution, denoted as $\text{SM}(a,q,b)$, with CDF
			$ F_\text{SM}(x;a,q,b)=1-[1+(x/b)^a]^{-q}$.
			In all the following scenarios, the scale parameter $ b $ is set to 1 and hence omitted, while the two shape parameters $ a $ and $ q $ vary.
			As in Section~\ref{LNmix cases}, we generate scenarios in which $\LPP$ crosses the identity. 
			In particular, we target the worst-case scenarios for our proposed tests, to investigate their limitations, by setting $F\sim\text{SM}(1.5,q)$ and $G\sim\text{SM}(1,q)$, for $q=1.2,1.5,1.8$.
			As shown in Figure~\ref{f3}, larger values of $q$ correspond to cases in which it is harder to detect the difference between $\LPP$ and the identity, especially using $\mathcal{T}_1$. Tables~\ref{tsm1l}, \ref{tsm2l} and \ref{tsm3l} show that KSB3 delivers larger power compared to our tests in such critical cases. In particular, while the performance of $\mathcal{T}_\infty$ significantly improves for larger samples and lower $ q $, the power of $\mathcal{T}_1$ is constantly close to 0, even for $n=1000$ and $ q=1.2 $. In light of part~7) of Proposition~\ref{properties}, this is due to the fact that $\mathcal{T}_1$ downsizes the deviations from the null, which are hardly classified as ``large", at least with the sample sizes considered. Indeed, the $p$-values of $\mathcal{T}_1$ tend to decrease and to be less variable as $n$ grows, coherently with Proposition~\ref{ptest} (see Figure~\ref{f4}). 
			This suggests that the power of $\mathcal{T}_1$ may eventually tend to 1 for larger samples. 
			However, when applied to the reverse hypotheses $\mathcal{H}_0^R$ and $\mathcal{H}_1^R$, the proposed tests $\mathcal{T}_\infty$ and $\mathcal{T}_1$ exhibit good performance, with rejection rates significantly increasing with $n$; see Tables~\ref{tsm1r}, \ref{tsm2r} and \ref{tsm3r}. On the contrary, KSB3 struggles to detect non-dominance and its power remains close to~$ 0 $, even for large samples.

		\begin{figure}[t]
			\centering
			\includegraphics[width=.5\textwidth]{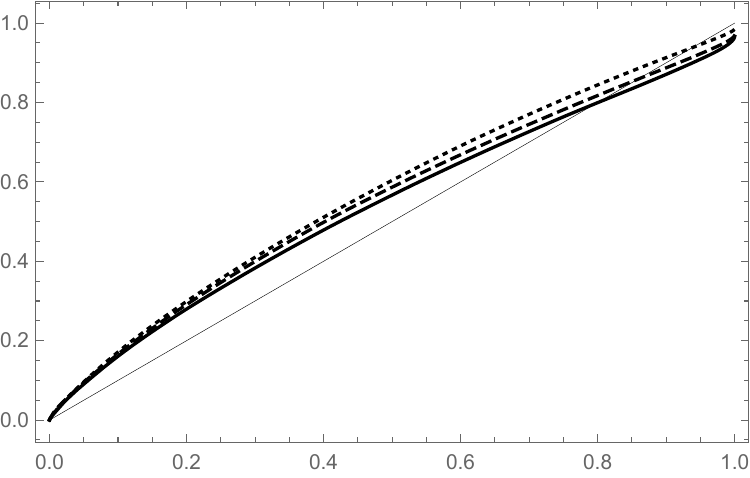}
			\caption{The behaviour of $\LPP$ for $q=1.2$ (solid), $q=1.5$ (dashed) and $q=1.8$ (dotted) in the Singh-Maddala case. Especially for $q=1.8$, it becomes very hard to detect deviations from $\mathcal{H}_0$. In the reverse cases, the LPPs are just the inverse functions of these.\label{f3}}
		\end{figure}

			\subsection{Paired samples}
			
			To simulate dependent samples we first draw a sample $\{(Z^1_i,Z^2_i):i=1,\ldots,n\}$ from a bivariate normal distribution, with standard marginals and correlation coefficient $\rho$. Then, by transforming the data via the standard normal CDF $\Phi$, we obtain a dependent sample from a bivariate distribution with uniform marginals $\{U_i^1=\Phi(Z^1_i): i=1,\ldots,n\}$ and $\{U_i^2=\Phi(Z^2_i): i=1,\ldots,n\}$. Finally, a dependent sample from a bivariate distribution with margins $F$ and $G$ is obtained as $\{(F^{-1}(U_i^1),G^{-1}(U_i^2)):i=1,\ldots,n\}$. In particular, we consider $\rho=0.25,0.5,0.75$. As in the previous subsections, we compare our results with those of KSB3. Note that, although \cite{bd} assume independence to prove the consistency properties of such a test, our simulations reveal that KSB3 exhibits a good performance even in the dependent case.
			
			In this paired setting, we consider the same Singh-Maddala distributions as in Section~\ref{SM cases}, focusing on the cases $q=1.2$ and $q=1.8$. The results, reported in Tables~\ref{tsm125}--\ref{tsm275}, confirm the ones reported in the previous tables, although it can be seen that a stronger dependence generally leads to larger rejection rates. Even $\mathcal{T}_1$, which was struggling with independent Singh-Maddala samples, shows a more evident decreasing trend in the empirical distribution of the $p$-values when samples are paired (see Figure~\ref{f5}). 
			
			\begin{figure}[h]
				\centering
				\begin{subfigure}[b]{0.49\textwidth}
					\centering
					\includegraphics[trim={0 1.5cm 0 1.5cm},clip,width=\textwidth]{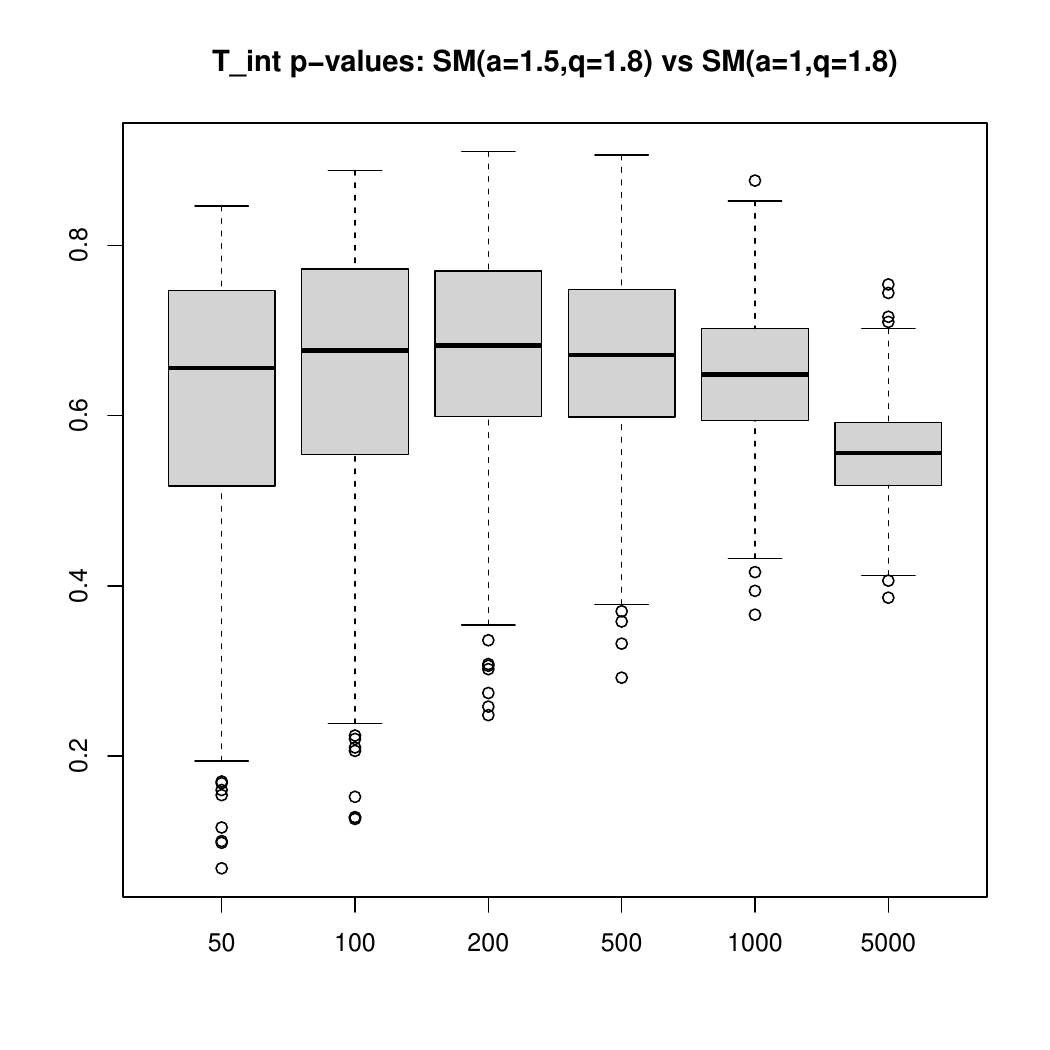}
					\caption{Independent Singh-Maddala ($q=1.8$)
						\label{f4}}
				\end{subfigure}
				\hfill
				\begin{subfigure}[b]{0.49\textwidth}
					\centering
					\includegraphics[trim={0 1.5cm 0 1.5cm},clip,width=\textwidth]{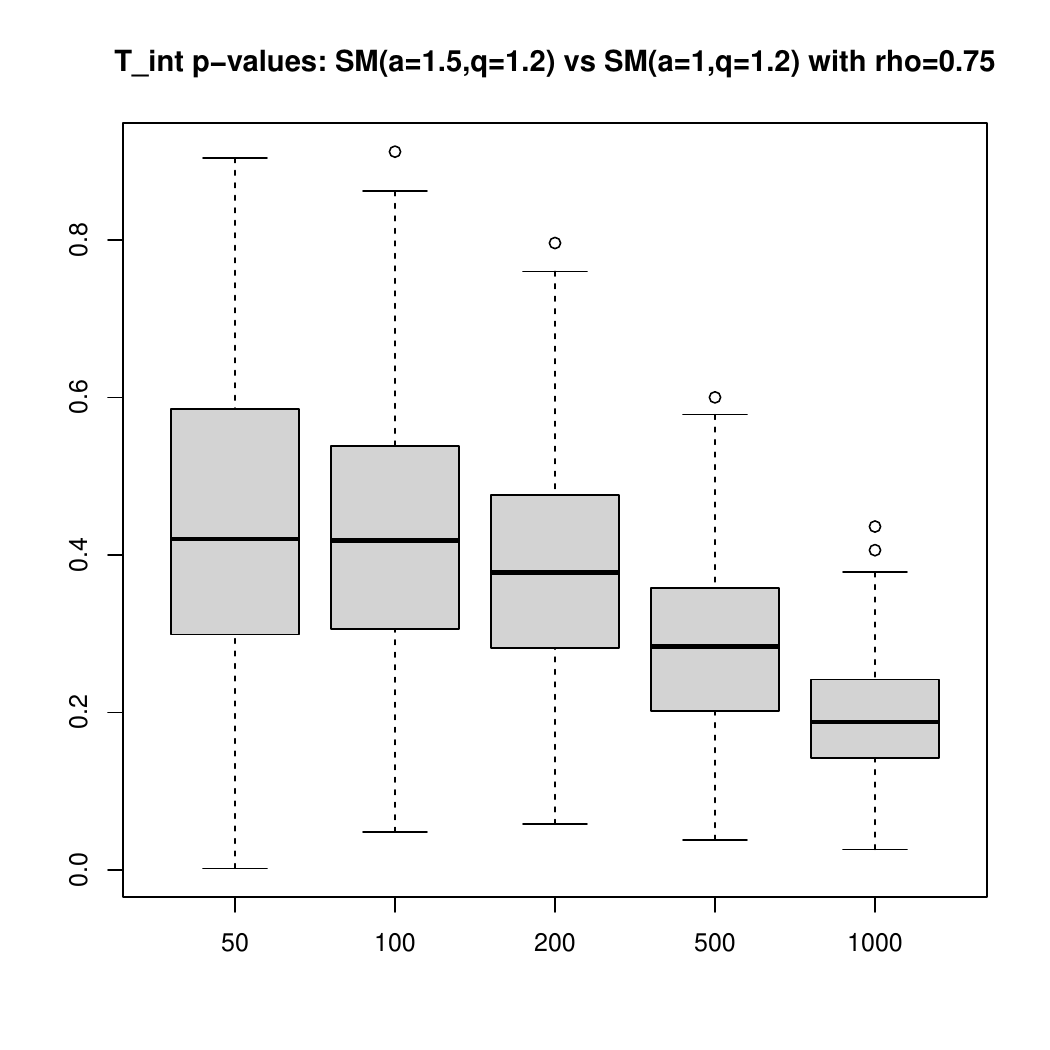}
					\caption{Dependent Singh-Maddala ($q=1.2$, $\rho=0.75$) \label{f5}}
				\end{subfigure}
				\caption{Box-plots of the simulated $p$-values of $\mathcal{T}_1$}
			\end{figure}
			
			
			%

			\subsection{Test for FSD}
			
			As discussed in Section~\ref{sect fraction}, our methodology also allows to test TSD, including an approximation of FSD, obtained as $\geq_{1+1/\theta}^T$ with $\theta\rightarrow\infty$.
			We then apply the method described in Section~\ref{sect fraction} to the same Singh-Maddala distributions studied in Section~\ref{SM cases}. 
			Since in these cases, SSD does not hold, we have that, \textit{a fortiori}, the FSD null hypothesis, denoted as $\mathcal{H}_0^{1}:F\geq_1 G$, is also false. This hypothesis can be tested using a sufficiently large value of $\theta$, as discussed in Section~\ref{sect fraction}. 
			In particular, we set $\theta=50$, which corresponds to approximating the FSD null hypothesis, $ \mathcal{H}_0^{1} $, with $\mathcal{H}_0^{1.02}$. Our method is compared with the FSD version of the KSB3 test described in \cite{bd}.
			In contrast to the KSB3 test for SSD, this latter test may be shown to be consistent even in the case when the distributions have unbounded supports.
			
			All the tests considered tend to provide a larger simulated power compared to the SSD case. This is logical since FSD is more stringent than SSD, and therefore, for the same pairs of distributions, it is easier to detect violations of FSD rather than of SSD. 
			The results in Tables~\ref{tfsd1}--\ref{tfsd3} show that KSB3 tends to provide larger power than our $\mathcal{T}_\infty$ and $\mathcal{T}_1$ tests under $\mathcal{H}^1_1:F\not\geq_1 G$. On the contrary, under the reverse alternative $(\mathcal{H}^1_1)^R:G\not\geq_1 F$, KSB3 exhibits a worse performance, also showing an unexpected behaviour, in that its rejection rates first increase and then decrease as $ n $ grows.

			\section{Concluding remarks}
			\label{sec_conclusion}
			
			In this paper, we proposed leveraging the LPP as a new tool to detect deviations from SSD in the case of non-negative random variables. 
			The same approach can be used to test TSD, hence including FSD as a limit case. The asymptotic properties in Section~\ref{sect 5} and the numerical results in Section~\ref{sect sim} show that our family of tests can be a valid alternative to the established tests based on the difference between integrals of CDFs, such as the tests in \cite{bd}. In particular, the KSB3 test is outperformed by our proposed sup-based test $\mathcal{T}_\infty$ in most of the cases analysed, sometimes with a remarkable gap. 
			
			Among the two tests proposed, our simulations reveal that the sup-based test $\mathcal{T}_\infty$ is also overall more reliable than the integral-based $\mathcal{T}_1$, which has lower power in the most critical cases. However, both tests may be useful. In fact, in light of Proposition~\ref{properties} part~7), and according to our numerical results in Section~\ref{SM cases} and Section~\ref{LNmix cases}, $\mathcal{T}_\infty$ performs better than $\mathcal{T}_1$ when deviations from $\mathcal{H}_0$ are subtle, while $\mathcal{T}_1$ provides higher power than $\mathcal{T}_\infty$ when deviations are more apparent. Therefore, in applications, it could be useful to use both tests and compare the $p$-values. It is also worth noting that our proposed tests seem to improve in terms of power when the samples are dependent. 
			
			In general, the advantage of using the LPP instead of integrals of CDFs is that it can be approximated uniformly, which allows to establish asymptotic properties without requiring bounded support; moreover, the LPP has a different sensitivity in detecting violations of SSD, compared to other methods. 
			Finally, the power of our tests may be improved further by combining the same proposed test statistics with different and less conservative bootstrap schemes. The latter represents an interesting direction for future work.

		\section*{Appendix A: Tables} \label{app_tables}
	
	\begin{table}[hbt!]
		\centering
		\begin{tabular}{rrrr}
			\hline
			$n$ & $\mathcal{T}_\infty$ & $\mathcal{T}_1$ & KSB3 \\ 
			\hline
			50 & 0.09 & 0.15 & 0.10 \\ 
			100 & 0.10 & 0.15 & 0.11 \\ 
			200 & 0.12 & 0.15 & 0.11 \\ 
			500 & 0.09 & 0.10 & 0.09 \\ 
			1000 & 0.08 & 0.09 & 0.09 \\ 
			\hline
		\end{tabular}
		\vspace{.1in}
		\caption{Rejection rates for $\mathcal{H}_0: F \geq_2 G$ under $F, G\sim W(1,1)$; independent samples}
		\label{t1}
	\end{table}
	
	\begin{table}[hbt!]
		\begin{subtable}[h]{0.5\textwidth}
			\centering
			\begin{tabular}{rrrr}
				\hline
				$n$ & $\mathcal{T}_\infty$ & $\mathcal{T}_1$ & KSB3 \\ 
				\hline
				50 & 0.08 & 0.10 & 0.12 \\ 
				100 & 0.04 & 0.06 & 0.11 \\ 
				200 & 0.03 & 0.02 & 0.10 \\ 
				500 & 0.02 & 0.00 & 0.11 \\ 
				1000 & 0.00 & 0.00 & 0.09 \\ 
				\hline
			\end{tabular}
			\caption{Test for $\mathcal{H}_0: F \geq_2 G$ (true)}
			\label{tw1l}   
		\end{subtable}
		\hfill
		\begin{subtable}[h]{0.5\textwidth}
			\centering
			\begin{tabular}{rrrr}
				\hline
				$n$ & $\mathcal{T}_\infty$ & $\mathcal{T}_1$ & KSB3 \\ 
				\hline
				50 & 0.19 & 0.31 & 0.12 \\ 
				100 & 0.22 & 0.29 & 0.11 \\ 
				200 & 0.27 & 0.36 & 0.11 \\ 
				500 & 0.50 & 0.54 & 0.12 \\ 
				1000 & 0.72 & 0.72 & 0.18 \\ 
				\hline
			\end{tabular}
			\caption{Test for $\mathcal{H}_0^R: G \geq_2 F$ (false)}
			\label{tw1r}
		\end{subtable}
		\caption{Rejection rates under $F\sim W(1.1,q_{1.1})$ and $G\sim W(1,1)$; independent samples}
		\label{tw1}			
	\end{table}
	
	\begin{table}[hbt!]
		\begin{subtable}[h]{0.5\textwidth}
			\centering
			\begin{tabular}{rrrr}
				\hline
				$n$ & $\mathcal{T}_\infty$ & $\mathcal{T}_1$ & KSB3 \\ 
				\hline
				50 & 0.04 & 0.05 & 0.12 \\ 
				100 & 0.04 & 0.02 & 0.13 \\ 
				200 & 0.02 & 0.00 & 0.12 \\ 
				500 & 0.01 & 0.00 & 0.11 \\ 
				1000 & 0.00 & 0.00 & 0.10 \\  
				\hline
			\end{tabular}
			\caption{Test for $\mathcal{H}_0: F \geq_2 G$ (true)}
			\label{tw2l}
		\end{subtable}
		\hfill
		\begin{subtable}[h]{0.5\textwidth}
			\centering
			\begin{tabular}{rrrr}
				\hline
				$n$ & $\mathcal{T}_\infty$ & $\mathcal{T}_1$ & KSB3 \\ 
				\hline
				50 & 0.29 & 0.45 & 0.14 \\ 
				100 & 0.37 & 0.50 & 0.13 \\ 
				200 & 0.56 & 0.66 & 0.15 \\ 
				500 & 0.87 & 0.88 & 0.26 \\ 
				1000 & 0.99 & 0.99 & 0.48 \\ 
				\hline
			\end{tabular}
			\caption{Test for $\mathcal{H}_0^R: G \geq_2 F$ (false)}
			\label{tw2r}
		\end{subtable}
		\caption{Rejection rates under $F\sim W(1.2,q_{1.2})$ and $G\sim W(1,1)$; independent samples}
		\label{tw2}
	\end{table}		
	
	\begin{table}[hbt!]
		\begin{subtable}[h]{0.5\textwidth}
			\centering
			\begin{tabular}{rrrr}
				\hline
				$n$ & $\mathcal{T}_\infty$ & $\mathcal{T}_1$ & KSB3 \\ 
				\hline
				50 & 0.03 & 0.03 & 0.12 \\ 
				100 & 0.02 & 0.00 & 0.12 \\ 
				200 & 0.01 & 0.00 & 0.11 \\ 
				500 & 0.01 & 0.00 & 0.11 \\ 
				1000 & 0.00 & 0.00 & 0.08 \\ 
				\hline
			\end{tabular}
			\caption{Test for $\mathcal{H}_0: F \geq_2 G$ (true)}
			\label{tw3l}
		\end{subtable}
		\hfill
		\begin{subtable}[h]{0.5\textwidth}
			\centering
			\begin{tabular}{rrrr}
				\hline
				$n$ & $\mathcal{T}_\infty$ & $\mathcal{T}_1$ & KSB3 \\ 
				\hline
				50 & 0.41 & 0.57 & 0.16 \\ 
				100 & 0.56 & 0.67 & 0.18 \\ 
				200 & 0.81 & 0.85 & 0.28 \\ 
				500 & 0.99 & 0.99 & 0.51 \\ 
				1000 & 1.00 & 1.00 & 0.88 \\ 
				\hline
			\end{tabular}
			\caption{Test for $\mathcal{H}_0^R: G \geq_2 F$ (false)}
			\label{tw3r}
		\end{subtable}
		\caption{Rejection rates under $F\sim W(1.3,q_{1.3})$ and $G\sim W(1,1)$; independent samples}
		\label{tw3}
	\end{table}
	
	\begin{table}[t!]
		\begin{subtable}[h]{0.5\textwidth}
			\centering
			\begin{tabular}{rrrr}
				\hline
				$n$ & $\mathcal{T}_\infty$ & $\mathcal{T}_1$ & KSB3 \\ 
				\hline
				50 & 0.25 & 0.17 & 0.43 \\ 
				100 & 0.43 & 0.15 & 0.59 \\ 
				200 & 0.58 & 0.10 & 0.74 \\ 
				500 & 0.89 & 0.08 & 0.98 \\ 
				1000 & 0.99 & 0.08 & 1.00 \\ 
				\hline
			\end{tabular}
			\caption{Test for $\mathcal{H}_0: F \geq_2 G$ (false)}
		\end{subtable}
		\hfill
		\begin{subtable}[h]{0.5\textwidth}
			\centering
			\begin{tabular}{rrrr}
				\hline
				$n$ & $\mathcal{T}_\infty$ & $\mathcal{T}_1$ & KSB3 \\ 
				\hline
				50 & 0.16 & 0.34 & 0.03 \\ 
				100 & 0.17 & 0.29 & 0.01 \\ 
				200 & 0.26 & 0.30 & 0.01 \\ 
				500 & 0.51 & 0.41 & 0.01 \\ 
				1000 & 0.84 & 0.68 & 0.02 \\  
				\hline
			\end{tabular}
			\caption{Test for $\mathcal{H}_0^R: G \geq_2 F$ (false)}
		\end{subtable}
		\caption{Rejection rates under $F=0.9F_{LN}(.;0.85;0.4)+0.1F_{LN}(0.4,0.9)$, $G=F_{LN}(.;0.86,0.6)$; independent samples. Note that, in case (a), the empirical power of $\mathcal{T}_1$ reaches 0.68 for $n=5000$.}
		\label{tln}	
	\end{table}
	
	\begin{table}[t!]
		\begin{subtable}[h]{0.5\textwidth}
			\centering
			\begin{tabular}{rrrr}
				\hline
				$n$ & $\mathcal{T}_\infty$ & $\mathcal{T}_1$ & KSB3 \\ 
				\hline
				50 & 0.02 & 0.00 & 0.38 \\ 
				100 & 0.04 & 0.00 & 0.52 \\ 
				200 & 0.05 & 0.00 & 0.63 \\ 
				500 & 0.11 & 0.00 & 0.88 \\ 
				1000 & 0.23 & 0.00 & 0.97 \\ 
				\hline
			\end{tabular}
			\caption{Test for $\mathcal{H}_0: F \geq_2 G$ (false)}
			\label{tsm1l}
		\end{subtable}
		\hfill
		\begin{subtable}[h]{0.5\textwidth}
			\centering
			\begin{tabular}{rrrr}
				\hline
				$n$ & $\mathcal{T}_\infty$ & $\mathcal{T}_1$ & KSB3 \\ 
				\hline
				50 & 0.56 & 0.77 & 0.06 \\ 
				100 & 0.76 & 0.87 & 0.03 \\ 
				200 & 0.97 & 0.98 & 0.02 \\ 
				500 & 1.00 & 1.00 & 0.01 \\ 
				1000 & 1.00 & 1.00 & 0.03 \\ 
				\hline
			\end{tabular}
			\caption{Test for $\mathcal{H}_0^R: G \geq_2 F$ (false)}
			\label{tsm1r}
		\end{subtable}
		\caption{Rejection rates under $F\sim\text{SM}(1.5,1.8)$, $G\sim\text{SM}(1,1.8)$; independent samples}
		\label{tsm1}
	\end{table}
	
	\begin{table}[hbt!]
		\begin{subtable}[h]{0.5\textwidth}
			\centering
			\begin{tabular}{rrrr}
				\hline
				$n$ & $\mathcal{T}_\infty$ & $\mathcal{T}_1$ & KSB3 \\ 
				\hline
				50 & 0.07 & 0.01 & 0.51 \\ 
				100 & 0.10 & 0.01 & 0.66 \\ 
				200 & 0.16 & 0.00 & 0.83 \\ 
				500 & 0.44 & 0.00 & 0.96 \\ 
				1000 & 0.81 & 0.00 & 1.00 \\ 
				\hline
			\end{tabular}
			\caption{Test for $\mathcal{H}_0: F \geq_2 G$ (false)}
			\label{tsm2l}
		\end{subtable}
		\hfill
		\begin{subtable}[h]{0.5\textwidth}
			\centering
			\begin{tabular}{rrrr}
				\hline
				$n$ & $\mathcal{T}_\infty$ & $\mathcal{T}_1$ & KSB3 \\ 
				\hline
				50 & 0.43 & 0.68 & 0.04 \\ 
				100 & 0.63 & 0.77 & 0.01 \\ 
				200 & 0.92 & 0.94 & 0.00 \\ 
				500 & 1.00 & 1.00 & 0.00 \\ 
				1000 & 1.00 & 1.00 & 0.00 \\ 
				\hline
			\end{tabular} 
			\caption{Test for $\mathcal{H}_0^R: G \geq_2 F$ (false)}
			\label{tsm2r}
		\end{subtable}
		\caption{Rejection rates under $F\sim\text{SM}(1.5,1.5)$, $G\sim\text{SM}(1,1.5)$; independent samples}
		\label{tsm2}
	\end{table}

	\begin{table}[hbt!]
		\begin{subtable}[h]{0.5\textwidth}
			\centering
			\begin{tabular}{rrrr}
				\hline
				$n$ & $\mathcal{T}_\infty$ & $\mathcal{T}_1$ & KSB3 \\ 
				\hline
				50 & 0.16 & 0.05 & 0.64 \\ 
				100 & 0.24 & 0.03 & 0.79 \\ 
				200 & 0.45 & 0.01 & 0.92 \\ 
				500 & 0.85 & 0.01 & 0.99 \\ 
				1000 & 0.99 & 0.01 & 1.00\\
				\hline
			\end{tabular}
			\caption{Test for $\mathcal{H}_0: F \geq_2 G$ (false)}
			\label{tsm3l}
		\end{subtable}
		\hfill
		\begin{subtable}[h]{0.5\textwidth}
			\centering
			\begin{tabular}{rrrr}
				\hline
				$n$ & $\mathcal{T}_\infty$ & $\mathcal{T}_1$ & KSB3 \\ 
				\hline
				50 & 0.27 & 0.52 & 0.03 \\ 
				100 & 0.48 & 0.61 & 0.00 \\ 
				200 & 0.77 & 0.83 & 0.01 \\ 
				500 & 0.99 & 0.99 & 0.00 \\ 
				1000 & 1.00 & 1.00 & 0.00 \\ 
				\hline
			\end{tabular}
			\caption{Test for $\mathcal{H}_0^R: G \geq_2 F$ (false)}
			\label{tsm3r}
		\end{subtable}
		\caption{Rejection rates under $F\sim\text{SM}(1.5,1.2)$, $G\sim\text{SM}(1,1.2)$; independent samples}
		\label{tsm3}
	\end{table}

	\begin{table}[hbt!]
		\begin{subtable}[h]{0.5\textwidth}
			\centering
			\begin{tabular}{rrrr}
				\hline
				$n$ & $\mathcal{T}_\infty$ & $\mathcal{T}_1$ & KSB3 \\ 
				\hline
				50 & 0.05 & 0.01 & 0.45 \\ 
				100 & 0.04 & 0.00 & 0.54 \\ 
				200 & 0.08 & 0.00 & 0.68 \\ 
				500 & 0.17 & 0.00 & 0.90 \\ 
				1000 & 0.32 & 0.00 & 0.99 \\  
				\hline
			\end{tabular}
			\caption{Test for $\mathcal{H}_0: F \geq_2 G$ (false)}
			
		\end{subtable}
		\hfill
		\begin{subtable}[h]{0.5\textwidth}
			\centering
			\begin{tabular}{rrrr}
				\hline
				$n$ & $\mathcal{T}_\infty$ & $\mathcal{T}_1$ & KSB3 \\ 
				\hline
				50 & 0.64 & 0.84 & 0.07 \\ 
				100 & 0.87 & 0.94 & 0.03 \\ 
				200 & 1.00 & 1.00 & 0.02 \\ 
				500 & 1.00 & 1.00 & 0.01 \\ 
				1000 & 1.00 & 1.00 & 0.04 \\ 
				\hline
			\end{tabular}
			\caption{Test for $\mathcal{H}_0^R: G \geq_2 F$ (false)}
		\end{subtable}
		\caption{Rejection rates under $F\sim\text{SM}(1.5,1.8)$, $G\sim\text{SM}(1,1.8)$, dependent samples with $\rho = 0.25$}\label{tsm125}
	\end{table}

	\begin{table}[hbt!]
		\begin{subtable}[h]{0.5\textwidth}
			\centering
			\begin{tabular}{rrrr}
				\hline
				$n$ & $\mathcal{T}_\infty$ & $\mathcal{T}_1$ & KSB3 \\ 
				\hline
				50 & 0.04 & 0.01 & 0.49 \\ 
				100 & 0.05 & 0.00 & 0.59 \\ 
				200 & 0.09 & 0.00 & 0.74 \\ 
				500 & 0.20 & 0.00 & 0.94 \\ 
				1000 & 0.44 & 0.00 & 1.00 \\ 
				\hline
			\end{tabular}
			\caption{Test for $\mathcal{H}_0: F \geq_2 G$ (false)}
		\end{subtable}
		\hfill
		\begin{subtable}[h]{0.5\textwidth}
			\centering
			\begin{tabular}{rrrr}
				\hline
				$n$ & $\mathcal{T}_\infty$ & $\mathcal{T}_1$ & KSB3 \\ 
				\hline
				50 & 0.75 & 0.93 & 0.07 \\ 
				100 & 0.94 & 0.99 & 0.04 \\ 
				200 & 1.00 & 1.00 & 0.02 \\ 
				500 & 1.00 & 1.00 & 0.02 \\ 
				1000 & 1.00 & 1.00 & 0.06 \\ 
				\hline
			\end{tabular}
			\caption{Test for $\mathcal{H}_0^R: G \geq_2 F$ (false)}
			
		\end{subtable}
		\caption{Rejection rates under $F\sim\text{SM}(1.5,1.8)$, $G\sim\text{SM}(1,1.8)$, dependent samples with $\rho = 0.5$}\label{tsm15}
	\end{table}
	

	\begin{table}[h!]
		\begin{subtable}[h]{0.5\textwidth}
			\centering
			\begin{tabular}{rrrr}
				\hline
				$n$ & $\mathcal{T}_\infty$ & $\mathcal{T}_1$ & KSB3 \\ 
				\hline
				50 & 0.03 & 0.00 & 0.58 \\ 
				100 & 0.08 & 0.00 & 0.70 \\ 
				200 & 0.13 & 0.00 & 0.87 \\ 
				500 & 0.32 & 0.00 & 0.99 \\ 
				1000 & 0.72 & 0.00 & 1.00 \\ 
				\hline
			\end{tabular}
			\caption{Test for $\mathcal{H}_0: F \geq_2 G$ (false)}
			
		\end{subtable}
		\hfill
		\begin{subtable}[h]{0.5\textwidth}
			\centering
			\begin{tabular}{rrrr}
				\hline
				$n$ & $\mathcal{T}_\infty$ & $\mathcal{T}_1$ & KSB3 \\ 
				\hline
				50 & 0.92 & 0.99 & 0.07 \\ 
				100 & 1.00 & 1.00 & 0.04 \\ 
				200 & 1.00 & 1.00 & 0.04 \\ 
				500 & 1.00 & 1.00 & 0.05 \\ 
				1000 & 1.00 & 1.00 & 0.15 \\ 
				\hline
			\end{tabular}
			\caption{Test for $\mathcal{H}_0^R: G \geq_2 F$ (false)}
			
		\end{subtable}
		\caption{Rejection rates under $F\sim\text{SM}(1.5,1.8)$, $G\sim\text{SM}(1,1.8)$, dependent samples with $\rho = 0.75$}\label{tsm175}
	\end{table}
	
	\begin{table}[hbt!]
		\begin{subtable}[h]{0.5\textwidth}
			\centering
			\begin{tabular}{rrrr}
				\hline
				$n$ & $\mathcal{T}_\infty$ & $\mathcal{T}_1$ & KSB3 \\ 
				\hline
				50 & 0.19 & 0.05 & 0.70 \\ 
				100 & 0.28 & 0.02 & 0.80 \\ 
				200 & 0.51 & 0.01 & 0.92 \\ 
				500 & 0.88 & 0.01 & 0.99 \\ 
				1000 & 1.00 & 0.02 & 0.99 \\
				\hline
			\end{tabular}
			\caption{Test for $\mathcal{H}_0: F \geq_2 G$ (false)}
			
		\end{subtable}
		\hfill
		\begin{subtable}[h]{0.5\textwidth}
			\centering
			\begin{tabular}{rrrr}
				\hline
				$n$ & $\mathcal{T}_\infty$ & $\mathcal{T}_1$ & KSB3 \\ 
				\hline
				50 & 0.34 & 0.58 & 0.03 \\ 
				100 & 0.56 & 0.71 & 0.01 \\ 
				200 & 0.89 & 0.91 & 0.01 \\ 
				500 & 1.00 & 1.00 & 0.00 \\ 
				1000 & 1.00 & 1.00 & 0.00 \\ 
				\hline
			\end{tabular}
			\caption{Test for $\mathcal{H}_0^R: G \geq_2 F$ (false)}
			
		\end{subtable}
		\caption{Rejection rates under $F\sim\text{SM}(1.5,1.2)$, $G\sim\text{SM}(1,1.2)$, dependent samples with $\rho = 0.25$}\label{tsm225}
	\end{table}
	
	\begin{table}[hbt!]
		\begin{subtable}[h]{0.5\textwidth}
			\centering
			\begin{tabular}{rrrr}
				\hline
				$n$ & $\mathcal{T}_\infty$ & $\mathcal{T}_1$ & KSB3 \\ 
				\hline
				50 & 0.21 & 0.05 & 0.76 \\ 
				100 & 0.38 & 0.01 & 0.85 \\ 
				200 & 0.64 & 0.01 & 0.95 \\ 
				500 & 0.96 & 0.01 & 0.99 \\ 
				1000 & 1.00 & 0.03 & 0.99 \\ 
				\hline
			\end{tabular}
			\caption{Test for $\mathcal{H}_0: F \geq_2 G$ (false)}
			
		\end{subtable}
		\hfill
		\begin{subtable}[h]{0.5\textwidth}
			\centering
			\begin{tabular}{rrrr}
				\hline
				$n$ & $\mathcal{T}_\infty$ & $\mathcal{T}_1$ & KSB3 \\ 
				\hline
				50 & 0.42 & 0.68 & 0.03 \\ 
				100 & 0.69 & 0.84 & 0.00 \\ 
				200 & 0.96 & 0.97 & 0.00 \\ 
				500 & 1.00 & 1.00 & 0.00 \\ 
				1000 & 1.00 & 1.00 & 0.00 \\ 
				\hline
			\end{tabular}
			\caption{Test for $\mathcal{H}_0^R: G \geq_2 F$ (false)}
		\end{subtable}
		\caption{Rejection rates under $F\sim\text{SM}(1.5,1.2)$, $G\sim\text{SM}(1,1.2)$, dependent samples with $\rho = 0.5$}\label{tsm25}
	\end{table}
	
	\begin{table}[hbt!]
		\begin{subtable}[h]{0.5\textwidth}
			\centering
			\begin{tabular}{rrrr}
				\hline
				$n$ & $\mathcal{T}_\infty$ & $\mathcal{T}_1$ & KSB3 \\ 
				\hline
				50 & 0.28 & 0.04 & 0.86 \\ 
				100 & 0.57 & 0.01 & 0.93 \\ 
				200 & 0.85 & 0.02 & 0.98 \\ 
				500 & 1.00 & 0.02 & 0.99 \\ 
				1000 & 1.00 & 0.09 & 1.00 \\
				\hline
			\end{tabular}
			\caption{Test for $\mathcal{H}_0: F \geq_2 G$ (false)}
			
		\end{subtable}
		\hfill
		\begin{subtable}[h]{0.5\textwidth}
			\centering
			\begin{tabular}{rrrr}
				\hline
				$n$ & $\mathcal{T}_\infty$ & $\mathcal{T}_1$ & KSB3 \\ 
				\hline
				50 & 0.61 & 0.88 & 0.02 \\ 
				100 & 0.91 & 0.97 & 0.00 \\ 
				200 & 1.00 & 1.00 & 0.00 \\ 
				500 & 1.00 & 1.00 & 0.00 \\ 
				1000 & 1.00 & 1.00 & 0.00 \\ 
				\hline
			\end{tabular}
			\caption{Test for $\mathcal{H}_0^R: G \geq_2 F$ (false)}
		\end{subtable}
		\caption{Rejection rates under $F\sim\text{SM}(1.5,1.2)$, $G\sim\text{SM}(1,1.2)$, dependent samples with $\rho = 0.75$}\label{tsm275}
	\end{table}
	
	\begin{table}[hbt!]
		\begin{subtable}[h]{0.5\textwidth}
			\centering
			\begin{tabular}{rrrr}
				\hline
				$n$ & $\mathcal{T}_\infty$ & $\mathcal{T}_1$ & KSB3 \\ 
				\hline
				50 & 0.11 & 0.14 & 0.24 \\ 
				100 & 0.22 & 0.16 & 0.41 \\ 
				200 & 0.44 & 0.21 & 0.64 \\ 
				500 & 0.86 & 0.42 & 0.92 \\ 
				1000 & 1.00 & 0.82 & 0.95 \\ 
				\hline
			\end{tabular}
			\caption{Test for $ {H}_0^1: F \geq_1 G $ (false)}
			
		\end{subtable}
		\hfill
		\begin{subtable}[h]{0.5\textwidth}
			\centering
			\begin{tabular}{rrrr}
				\hline
				$n$ & $\mathcal{T}_\infty$ & $\mathcal{T}_1$ & KSB3 \\ 
				\hline
				50 & 0.03 & 0.29 & 0.19 \\ 
				100 & 0.10 & 0.38 & 0.19 \\ 
				200 & 0.31 & 0.53 & 0.14 \\ 
				500 & 0.92 & 0.92 & 0.05 \\ 
				1000 & 1.00 & 1.00 & 0.00 \\    
				\hline
			\end{tabular}
			\caption{Test for $({H}_0^1)^R: G \geq_1 F$ (false)}
			
		\end{subtable}
		\caption{FSD test. Rejection rates under $F\sim\text{SM}(1.5,1.2)$, $G\sim\text{SM}(1,1.2)$}\label{tfsd1}
	\end{table}
	

	\begin{table}[hbt!]
		\begin{subtable}[h]{0.5\textwidth}
			\centering
			\begin{tabular}{rrrr}
				\hline
				$n$ & $\mathcal{T}_\infty$ & $\mathcal{T}_1$ & KSB3 \\ 
				\hline
				50 & 0.08 & 0.09 & 0.15 \\ 
				100 & 0.11 & 0.06 & 0.24 \\ 
				200 & 0.20 & 0.05 & 0.39 \\ 
				500 & 0.64 & 0.10 & 0.85 \\ 
				1000 & 0.96 & 0.29 & 0.97 \\
				\hline
			\end{tabular}
			\caption{Test for ${H}_0^1: F \geq_1 G$ (false)}
			
		\end{subtable}
		\hfill
		\begin{subtable}[h]{0.5\textwidth}
			\centering
			\begin{tabular}{rrrr}
				\hline
				$n$ & $\mathcal{T}_\infty$ & $\mathcal{T}_1$ & KSB3 \\ 
				\hline
				50 & 0.08 & 0.44 & 0.32 \\ 
				100 & 0.19 & 0.56 & 0.44 \\ 
				200 & 0.52 & 0.80 & 0.52 \\ 
				500 & 0.99 & 0.99 & 0.39 \\ 
				1000 & 1.00 & 1.00 & 0.22 \\  
				\hline
			\end{tabular}
			\caption{Test for $({H}_0^1)^R: G \geq_1 F$ (false)}
			
		\end{subtable}
		\caption{FSD test. Rejection rates under $F\sim\text{SM}(1.5,1.5)$, $G\sim\text{SM}(1,1.5)$}\label{tfsd2}
	\end{table}

	\begin{table}[hbt!]
		\begin{subtable}[h]{0.5\textwidth}
			\centering
			\begin{tabular}{rrrr}
				\hline
				$n$ & $\mathcal{T}_\infty$ & $\mathcal{T}_1$ & KSB3 \\ 
				\hline
				50 & 0.03 & 0.02 & 0.07 \\ 
				100 & 0.04 & 0.02 & 0.13 \\ 
				200 & 0.09 & 0.01 & 0.24 \\ 
				500 & 0.36 & 0.01 & 0.60 \\ 
				1000 & 0.77 & 0.02 & 0.92 \\ 
				\hline
			\end{tabular}
			\caption{Test for ${H}_0^1: F \geq_1 G$ (false)}
			
		\end{subtable}
		\hfill
		\begin{subtable}[h]{0.5\textwidth}
			\centering
			\begin{tabular}{rrrr}
				\hline
				$n$ & $\mathcal{T}_\infty$ & $\mathcal{T}_1$ & KSB3 \\ 
				\hline
				50 & 0.10 & 0.59 & 0.47 \\ 
				100 & 0.30 & 0.70 & 0.63 \\ 
				200 & 0.74 & 0.91 & 0.78 \\ 
				500 & 0.99 & 1.00 & 0.78 \\ 
				1000 & 1.00 & 1.00 & 0.67 \\   
				\hline
			\end{tabular}
			\caption{Test for $({H}_0^1)^R: G \geq_1 F$ (false)}
			
		\end{subtable}
		\caption{FSD test. Rejection rates under $F\sim\text{SM}(1.5,1.8)$, $G\sim\text{SM}(1,1.8)$}\label{tfsd3}
	\end{table}

			\section*{Appendix B: Proofs}
			\label{app_proofs}
			
			\begin{proof}[Calculations of Example~\ref{ex weibull}.]
				{The unscaled Lorenz curve of $ F $ is}
				$$L_F(p)=b \left(\Gamma \left(1+\frac{1}{a}\right)-\Gamma
				\left(1+\frac{1}{a},-\log (1-p)\right)\right),$$
				{while} 
				$$L_G(p)=p+(1-p) \log (1-p).$$ 
				It is well known \citep[e.g.][]{goldie} that $L_G$ can be expressed as $L_G(p)=M_G\circ G^{-1}(p)$, with $$M_G(x)=\int_0^x t \  dG(t)=1-e^{-x} (x+1), \qquad x\geq0.$$ 
				Noting that $M_G(x)\leq\mu_G=1$ for any $x\geq0$, this function can be inverted using the Lambert $W_{-1}$ function \citep{lambert}, that is 
				$M_G^{-1}(t)=-1-W_{-1}\left((t-1)/e\right)$. 
				Accordingly, $$L_G^{-1}(t)=G\circ M_G^{-1}(t)=1-\exp\left({1+W_{-1}\left(\frac{t-1}{e}\right)}\right).$$Finally, by composition, we obtain the expression of $Z$ in Example~\ref{ex weibull}.
			\end{proof}

			\begin{proof}[Proof of Proposition~\ref{properties}.]\begin{enumerate}\item This follows from the properties of the $L^p$ norm.\item If $v_2(x)\leq0,\forall x\in[0,1]$ then $v_1(x)-v_2(x)\geq v_1(x),\forall x\in[0,1]$ which implies $(v_1(x)-v_2(x))_+^p\geq (v_1(x))_+^p,\forall x\in[0,1]$ and therefore $||(v_1-v_2)_+||_p\geq||(v_1)_+||_p$ by monotonicity of integrals.
					\item The proof is the same as in Lemma 2 of \cite{bdlorenz} and relies on the fact that $v_1\in C[0,1]$.
					\item Minkowski's inequality implies that, for some pair of functions $u,v\in C[0,1]$, $||u||_p=||(u-v)+v||_p\leq ||u-v||_p+||v||_p$, so that $||u||_p-||v||_p\leq ||u-v||_p$, and similarly, $||u-v||_p\geq||v||_p-||u||_p$, therefore, $|||u||_p-||v||_p|\leq ||u-v||_p$. Then $$\lvert||(v_1)_+||_p-||(v_2)_+||_p\lvert\leq ||(v_1)_+-(v_2)_+||_p\leq ||v_1-v_2||_p\leq ||v_1-v_2||_\infty,$$
					where the second inequality follows from the fact that, for every $x\in[0,1]$,  $|(v_1(x))_+-(v_2(x))_+|\leq |v_1(x)-v_2(x)|$.
					\item The proof follows from absolute homogeneity of the $L^p$ norm.
					\item Let $\beta\in[0,1]$. By convexity of the function $(\cdot)_+$, Minkowski's inequality, and absolute homogeneity of the $L^p$ norm, \begin{align*}\mathcal{T}_p(\beta (v_2)+(1-\beta)v_1)=||(\beta v_2+(1-\beta)v_1)_+||_p\leq ||\beta (v_2)_++(1-\beta)(v_1)_+||_p\\
						\leq\beta|| (v_2)_+||_p+(1-\beta)||(v_1)_+||_p=\beta \mathcal{T}_p(v_2)+(1-\beta)\mathcal{T}_p(v_1).\end{align*}
					\item This follows from basic properties of $L^p$ norms.
				\end{enumerate}
				
			\end{proof}
			
			\begin{proof}[Proof of Proposition~\ref{P1}.]
				$\widetilde{\LPP }_{n,m}-\LPP _{n,m}$ can be expressed as
				$$\widetilde{L}^{-1}_{G_m}\circ \widetilde{L}_{F_n}-{L}^{-1}_{G_m}\circ {L}_{F_n}=(\widetilde{L}^{-1}_{G_m}\circ \widetilde{L}_{F_n}-{\widetilde{L}}^{-1}_{G_m}\circ {L}_{F_n})+(\widetilde{L}^{-1}_{G_m}\circ{L}_{F_n}-{L}^{-1}_{G_m}\circ {L}_{F_n}).$$
				For the first summand, which is the difference between two step functions, we have $\widetilde{L}^{-1}_{G_m}\circ \widetilde{L}_{F_n}(p)\geq{\widetilde{L}}^{-1}_{G_m}\circ {L}_{F_n} (p)$ for every $p\in[0,1]$, since $ \widetilde{L}_{F_n}(p)\geq {L}_{F_n} (p)$ for every $p\in[0,1]$. Moreover, $\widetilde{L}^{-1}_{G_m}\circ \widetilde{L}_{F_n}(k/n)={\widetilde{L}}^{-1}_{G_m}\circ {L}_{F_n} (k/n)$ for $k=0,...,n$, while, within each interval $((k-1)/n,k/n)$, the difference $\widetilde{L}^{-1}_{G_m}\circ \widetilde{L}_{F_n}(p)-{\widetilde{L}}^{-1}_{G_m}\circ {L}_{F_n} (p)$ is bounded above the height of the jumps of $\widetilde{L}^{-1}_{G_m}$, that is, $1/m$. 
				For the latter summand, $\widetilde{L}^{-1}_{G_m}\circ {L}_{F_n}-{L}^{-1}_{G_m}\circ {L}_{F_n}\in[-\frac1m,0]$, since clearly ${L}^{-1}_{G_m}\circ{L}_{F_n}$ is the linear interpolator of the jump points of the step function $\widetilde{L}^{-1}_{G_m}\circ {L}_{F_n}$. Hence, the result follows.
			\end{proof}
			
			\begin{proof}[Proof of Proposition~\ref{P2}.]
				As proved in Theorem 10.1 and Theorem 13.2 of \cite{csorgo2013}, $\widetilde{L}_{G_m}^{-1}$ and $\widetilde{L}_{F_n}$ converge strongly and uniformly to $L_{G}^{-1}$ and $L_{F}$, respectively. Since $L_G^{-1}$ is uniformly continuous in $[0,\infty)$ and $\sup_p|L_{F_n}(p)-L_{F}(p)|\rightarrow 0$ almost surely, we obtain that $\sup_p|L_G^{-1}\circ \widetilde{L}_{F_n}(p)-L_G^{-1}\circ L_{F}(p)|\rightarrow 0$ almost surely. Then, for every $p\in(0,1)$,
				\begin{align*}
					|\widetilde{L}_{G_{m}}^{-1}\circ \widetilde{L}_{F_n}(p)-L_G^{-1}\circ L_{F}(p)|
					&\leq|\widetilde{L}_{G_{m}}^{-1}\circ \widetilde{L}_{F_n}(p)-L_G^{-1}\circ \widetilde{L}_{F_n}(p)|+|L_{G}^{-1}\circ \widetilde{L}_{F_n}(p)-L_G^{-1}\circ L_{F}(p)|\\
					&\leq\sup_p|\widetilde{L}^{-1}_{G_m}(p)-L^{-1}_{G}(p)|+\sup_p|L_G^{-1}\circ \widetilde{L}_{F_n}(p)-L_G^{-1}\circ L_{F}(p)|.
				\end{align*}
				Since both terms in the right-hand side                                                                                                                                                                                                                                                                                                                                                                                                                                      converge to 0 with probability 1, we obtain that $\widetilde{\LPP }_{n,m}$ converges strongly and uniformly to $\LPP $ in $[0,1]$. By Proposition~\ref{P1}, {$|\widetilde{\LPP }_{n,m}-\LPP _{n,m}|\rightarrow0$ for $n \rightarrow \infty$ and $ m \rightarrow \infty $}, therefore the same property is satisfied by ${\LPP }_{n,m}$.
			\end{proof}
			
			\begin{proof}[Proof of Theorem~\ref{LPP}.]
				Let $\mathbb{L}$ be the space of maps $z:[0,\infty)\rightarrow\mathbb{R}$ with $\lim_{x\rightarrow -\infty}z(x)=0$ and $\lim_{x\rightarrow\infty}z(x)=1$, and the norm $||z||_\mathbb{L}=\max\{||z||_\infty,||1-z||_1\}$. As shown by \cite{kaji}, under assumption i), the map $\phi(F)=F^{-1}$, from CDFs to quantile functions, is Hadamard differentiable at $F$, tangentially to the set $\mathbb{L}_0$ of continuous functions in $\mathbb{L}$, with derivative map
				$$\phi_F'(z)=-(z\circ F^{-1})(F^{-1})'. $$
				The linear map ${\psi }(F^{-1})=\int_0^. F^{-1}(t)dt$ coincides with its Hadamard derivative. Accordingly, by the chain rule \citep[Lemma 3.9.3]{vw}, the composition map $\psi \circ \phi:F\rightarrow L_F$ is also Hadamard differentiable at $F$ tangentially to $\mathbb{L}_0$, with derivative $$(\psi \circ\phi)'_F(z)=\psi '_{\phi(F)}\circ \phi_F'(z)=-\int_0^. z\circ F^{-1}(p)dF^{-1}(p).$$
				Now, observe that $$\begin{pmatrix}\sqrt{n}(F_n-F) \\ \sqrt{m}(G_m-G)\end{pmatrix}\rightsquigarrow \begin{pmatrix}\mathcal{B}_1\circ F \\ \mathcal{B}_2\circ G\end{pmatrix}\text{  in }\mathbb{L}\times\mathbb{L},$$
				as shown in Lemma 5.1 of \cite{sunbeare}. Then, the functional delta method \citep[Theorem 3.9.13]{vw} implies the joint weak convergence 
				\begin{multline}
					\label{lor process}
					\begin{pmatrix}\sqrt{n}(L_{F_n}-L_F) \\ \sqrt{m}(L_{G_m}-L_G)\end{pmatrix}=\begin{pmatrix}\sqrt{n}(\psi \circ\phi({F_n})-\psi \circ\phi({F})) \\ \sqrt{m}(\psi \circ\phi({G_m})-\psi \circ\phi({G}))\end{pmatrix} \rightsquigarrow\begin{pmatrix} (\psi \circ\phi)'_F(\mathcal{B}_1\circ F) \\ (\psi \circ\phi)'_G(\mathcal{B}_2\circ G)\end{pmatrix}\\=
					\begin{pmatrix}- \int_0^.\mathcal{B}_1(t)dF^{-1}(t) \\- \int_0^.\mathcal{B}_2(t)dG^{-1}(t)\end{pmatrix} =:\begin{pmatrix}\mathcal{L}_F\\ \mathcal{L}_G\end{pmatrix}\text{  in }C[0,1]\times C[0,1].
				\end{multline}
				Now, consider the process $\sqrt{m}(L^{-1}_{G_m}(t)-L^{-1}_G(t))$, for $t\in[0,\mu_G]$. The LC $L_G$ is increasing and continuous on $[0,1]$, therefore the inverse function $L_G^{-1}$ is increasing and continuous on $[0,\mu_G]$, moreover the derivative $L_G'=G^{-1}$ is strictly positive in the unit interval (note that assumption ii) entails that $G^{-1}(0)=c>0$). Then, by the inverse map theorem \cite[Lemma 3.9.20]{vw} the map $\eta:L_G\rightarrow L^{-1}_G$ is Hadamard-differentiable at $L_G$, tangentially to the set of bounded functions on $[0,1]$, with derivative $$\eta'_{L_G}(z)=-\frac{z\circ L_G^{-1}}{L_G'\circ L_G^{-1}}=-\frac{z\circ L_G^{-1}}{G^{-1}\circ L_G^{-1}}.$$
				Since $r_n/n\rightarrow 1-\lambda$ and $r_n/m\rightarrow \lambda$, by (\ref{lor process}) and the functional delta method, the above result implies
				\begin{equation}\label{conc process}\sqrt{r_n}\begin{pmatrix} L_{F_n}-L_F \\ L_{G_m}^{-1}-L_G^{-1} \end{pmatrix}\rightsquigarrow \begin{pmatrix}\lambda  \mathcal{L}_F \\ (1-\lambda)\eta'_{L_G}(\mathcal{L}_G) \end{pmatrix}=:\begin{pmatrix} \lambda \mathcal{L}_F \\ (1-\lambda) \mathcal{C}_G \end{pmatrix} \text{  in }C[0,1]\times C[0,\mu_G],\end{equation}
				where $\mathcal{C}_G$ is defined as
				$$\mathcal{C}_G(t)=\frac{\int_0^{L_G^{-1}(t)}\mathcal{B}_2(p)dG^{-1}(p)}{G^{-1}\circ L_G^{-1}(t)}, \qquad t\in[0,\mu_G].$$

				Now, consider the maps $\pi=\psi \circ \phi:F\rightarrow L_F$, $\theta:h\rightarrow L_G^{-1}\circ h$ and the composition map $\zeta:C[0,1]\times C[0,\mu_G]\rightarrow C[0,\nu]$ defined by $\zeta(\pi,\theta)(x)=\theta\circ \pi(x)$. 
				Recall that the Hadamard derivative of $\theta$ is $\theta'_{L_F}(\alpha)=((L_G^{-1})'\circ L_F)\alpha$, and $L_G^{-1}$ is uniformly norm-bounded, since $(L_G^{-1})'\leq 1/c$, therefore we can apply Lemma 3.2.27 of \cite{vw}, which establishes that $\zeta$ is Hadamard differentiable at $(\pi,\theta)$, tangentially to the set $C[0,1]\times UC[0,\mu_G]$, where $UC[0,\mu_G]$ is the family of uniformly continuous functions on $[0,\mu_G]$, with derivative
				$$\zeta'_{\pi,\theta}(\alpha,\beta)(x)=\beta\circ \pi(x)+\theta'_{\pi(x)}(\alpha(x))=\beta\circ L_F(x)+\theta'_{L_F}(\alpha(x)) =\beta\circ L_F(x)+  ((L_G^{-1})'\circ L_F)\alpha(x).$$
				Now, since $\LPP =\zeta(L_F,L^{-1}_G)$, using (\ref{conc process}), the functional delta method and the Hadamard differentiability of the composition map $\zeta$ give
				\begin{align*}
					\sqrt{r_n}(\LPP _{n,m}-\LPP )\1{[0,\nu]} &=\sqrt{r_n}(\zeta(L_{F_n},L^{-1}_{G_m})-\zeta(L_F,L^{-1}_G) )\1{[0,\nu]}\\
					&\rightsquigarrow \zeta'_{ L_F,L^{-1}_G}(\lambda\mathcal{L}_F,(1-\lambda)\mathcal{C}_G)
					=\sqrt{\lambda}\mathcal{C}_G\circ L_F+  \sqrt{1-\lambda} \frac{\mathcal{L}_F}{G^{-1}\circ L_G^{-1}\circ L_F}\\
					&=\frac{-\sqrt{1-\lambda}\int_0^.\mathcal{B}_1dF^{-1}(p)+\sqrt{\lambda}\int_0^\LPP \mathcal{B}_2(p)dG^{-1}(p)  }{G^{-1}\circ\LPP } \quad \text{in } C[0,\nu],
				\end{align*}
				which implies the statement since $\LP _n\1{(\nu,1]}\rightsquigarrow 0$.
			\end{proof}
			
			\begin{proof}[Proof of Lemma~\ref{lemmatest}.]
				Bear in mind that $\LP \1{[0,\nu]}$ is a mean-zero Gaussian process since it is obtained by integrating and normalizing Gaussian processes. Under $\mathcal{H}_0$, $p-\LPP (p)\leq 0,\forall p\in[0,1]$, hence $$\sqrt{r_n} \ \mathcal{T}(I-\LPP _{n,m})\leq \sqrt{r_n} \ \mathcal{T}(\LPP -\LPP _{n,m})=\mathcal{T}(\sqrt{r_n}(\LPP -\LPP _{n,m}))\rightsquigarrow \mathcal{T}(\LP ),$$ where the last step follows from the continuous mapping theorem, since the map $\mathcal{T}( f)$ satisfies $|\mathcal{T}( f)-\mathcal{T}( g)|\leq ||f-g||_\infty$, where $f,g$ are continuous functions on the unit interval, as proved in Lemma 2 of \cite{bdlorenz}. 
				The $(1-\alpha)$ quantile of the distribution of $\mathcal{T}(\LP )$ is positive, finite, and unique because $\LP $ is a mean zero Gaussian process, so the proof follows by the same arguments used in the proof of Lemma 4 in \cite{bdlorenz}. Since, by Proposition~\ref{P2}, $\LPP _{n,m}$ converges strongly and uniformly to $\LPP $, under $\mathcal{H}_1$ we have $\mathcal{T} (I-\LPP _{n,m})\rightarrow_p \mathcal{T} (I-\LPP )>0$. Finally, multiplying by $\sqrt{r_n}$, we obtain the second result.
			\end{proof}
			
			\begin{proof}[Proof of Proposition~\ref{ptest}.]
				As proved in Lemma 5.2 of \cite{sunbeare},
				$$\begin{pmatrix} \sqrt{n}({F^*_n}-{F_n} )\\ \sqrt{m}({G^*_n}-{G_n}) \end{pmatrix}\vc{\rightsquigarrow}{as*}{M} \begin{pmatrix} \mathcal{B}_1\circ F \\ \mathcal{B}_2\circ G \end{pmatrix} \text{  in }\mathbb{L}\times \mathbb{L},$$
				where $\vc{\rightsquigarrow}{as*}{M}$ denotes weak convergence conditional on the data a.s., see \cite[p.20]{kosorok}.
				The proof of Theorem~\ref{LPP} establishes the Hadamard-differentiability of the maps $\psi \circ \phi:F\rightarrow L_F$ and $\eta\circ \psi \circ \phi:G\rightarrow L_G^{-1}$, so that the functional delta method for the bootstrap implies
				$$\sqrt{r_n}\begin{pmatrix} L_{F^*_n}-L_{F_n} \\ L_{G^*_n}^{-1}-L_{G_n}^{-1} \end{pmatrix}\vc{\rightsquigarrow}{P}{M} \begin{pmatrix} \lambda \mathcal{L}_F \\ (1-\lambda) \mathcal{C}_G \end{pmatrix} \text{  in }C[0,1]\times C[0,\mu_G],$$
				where $\vc{\rightsquigarrow}{P}{M}$ denotes weak convergence conditional on the data in probability \citep[see][p.20]{kosorok}.
				Using the Hadamard differentiability of the composition map $\zeta(L_F,L_G^{-1})=L_G^{-1}\circ L_F$, the functional delta method for bootstrap implies $\sqrt{r_n}(\LPP _{n,m}^*-\LPP _{n,m})\vc{\rightsquigarrow}{P}{M} \LP $, which entails that $\mathcal{T}(\sqrt{r_n}(\LPP_{n,m}-\LPP^* _{n,m}))\vc{\rightsquigarrow}{P}{M} \mathcal{T}( -\LP )=_d \mathcal{T}( \LP )$ by the continuous mapping theorem. The test rejects the null hypothesis if the test statistic exceeds the bootstrap threshold $c^*_n(\alpha)=\inf\{y: P(\sqrt{r_n} \ \mathcal{T}(\LPP _{n,m}-\LPP ^*_{k;n,m})>y|{\mathcal{X},\mathcal{Y}})\leq \alpha\}$, but the weak convergence result implies $c_n^*(\alpha)\rightarrow_p c(\alpha)=\inf\{y: P(\mathcal{T}({\LP })>y)\leq \alpha\}$. Hence, Lemma~\ref{lemmatest} yields the result.
			\end{proof}
			
			\begin{proof}[Proof of Theorem~\ref{TT}.]
				Integrating by substitution, we can see that $X\geq^T_u Y$ if and only if $u(X)\geq_2 u(Y)$, since $P(u(X)\leq t)=F_X \circ u^{-1}(t)$, and similarly for $Y$. Hence, by setting $\phi=g\circ u$, the proof follows from the classic characterisation of SSD, since $\E(g\circ u(X))\geq\E(g\circ u(Y))$, for any increasing concave function $g$.
			\end{proof}
			
			\begin{proof}[Proof of Theorem~\ref{T7}.]
				Point 1) follows from the fact that $u_1(X)\geq_2 u_1(Y)$ implies $\E(u_2\circ u_1^{-1}\circ u_1(X))=\E(u_2(X))\geq \E(u_2(Y))$, because the composition $u_2\circ u_1^{-1}$ is concave by construction. The ``only if" part of point 2) is trivial. The ``if" part follows from the characterisation of FSD, taking into account that the equivalent condition of Theorem~\ref{TT}, that is, $\E(\phi(X))\geq\E(\phi(Y)),\forall \phi\leq_c u$, for every $u\in\mathcal{U}$, implies that such an inequality holds just for every increasing $\phi\in \mathcal{U}$. Since any increasing function may be approximated by a sequence of functions in $\mathcal{U}$, we have $X\geq_1 Y$.
			\end{proof}
			
			\begin{proof}[Proof of Theorem~\ref{T8}.] 
				1. 
				$X\geq_{1+1/\theta}^TY$ con be expressed as
				\begin{equation}\label{tsd}
					\int_{-\infty}^x F(t)du(t)\leq \int_{-\infty}^x G(t)du(t), \qquad \forall x.
				\end{equation}
				Integrating by parts and by substitution, we obtain that, for both $ H=F $ and $ H=G $,
				$$\mathcal{I}^\theta_H(x)=\int_0^x H(t)du_\theta(t)=u_\theta(x)H(x)-\int_0^x u_\theta(t)dH(t)=u_\theta(x)H(x)-\int_{0}^{H(x)} u_\theta\circ H^{-1}(y)dy.$$
				Hence,
				$$\frac{\mathcal{I}^\theta_H(x)}{u_\theta(x)}=H(x)-\int_{0}^{H(x)} \frac{u_\theta\circ H^{-1}(y)}{u_\theta(x)}dy=H(x)-\int_{0}^{H(x)}\left(\frac{H^{-1}(y)}{x}\right)^{\theta} dy\rightarrow H(x),$$
				by the Lebesgue dominated convergence theorem, recalling that ${H^{-1}(y)}/{x}\leq 1$ as $y\leq H(x)$. Now, it is readily seen that $X\geq^T_{1+1/\theta} Y$ if and only if ${\mathcal{I}^\theta_F(x)}/{u_\theta(x)} \leq  {\mathcal{I}^\theta_G(x)}/{u_\theta(x)}$ for any $x$, which implies the result.
				
				2.
				Let $x_1,...,x_n$ and $y_1,...,y_m$ be ordered realisations from $X$ and $Y$, respectively. By properties of the power function, there exists some number $\theta_0$ such that, for $\theta>\theta_0$, $(1/n)\sum_{k=1}^ix_k^{\theta}\in((1/m)\sum_{k=1}^{j-1}y_k^{\theta},(1/m)\sum_{k=1}^jy_k^{\theta})$ if and only if ${x_i}/n \in({y_{j-1}}/m,{y_j}/m)$. In fact,
				\begin{equation*} x_i\left(\frac1n(\sum_{k=1}^{i-1}\left(\tfrac{x_k}{x_i}\right)^{\theta}+1)\right)\in
					\left(y_{j-1}\left(\frac1m(\sum_{k=1}^{j-2}\left(\tfrac{y_k}{y_{j-1}}\right)^{\theta}+1)\right),y_j\left(\frac1m(\sum_{k=1}^{j-1}\left(\tfrac{y_k}{y_j}\right)^{\theta}+1)\right)\right).
				\end{equation*}
				Accordingly, for any $i=1,...,n$ and $\theta>\theta_0$, $\widetilde{\LPP }_{n,m}^\theta$ returns $ j/m$ if ${x_i}/n \in({y_{j-1}}/m,{y_j}/m)$, which coincides with the P-P plot $G_m\circ F_n^{-1}$.
		\end{proof}

			
			\section*{Funding}
			This research was supported by the Italian funds ex-MURST60\%. T.L. was also supported by the Czech Science Foundation (GACR) under the project 20-16764S and by V\v{S}B-TU Ostrava (SGS project SP2021/15).
			
			\vspace{.1in}
			\noindent \textit{Conflict of interest:} None declared.


\bibliographystyle{elsarticle-harv}
\bibliography{ssd_biblio}

\textbf{Corresponding Author.} Tommaso Lando, e-mail: tommaso.lando@unibg.it

\




\end{document}